\documentclass[11pt,oneside,reqno]{amsart}
\usepackage{amssymb, url, color, pb-diagram, graphicx, amscd, pb-diagram,mathrsfs,amsfonts}
\usepackage[colorlinks=true, bookmarks=true, pdfstartview=FitH, pagebackref=true]{hyperref}
\usepackage[nodayofweek]{datetime}
\usepackage{enumerate,stmaryrd}
\usepackage[utf8]{inputenc}
\usepackage[english]{babel}
\usepackage{amstext, amsmath,latexsym,amsbsy,amssymb,amsthm,graphicx}
\usepackage{mathrsfs}
\usepackage{txfonts}
\usepackage{amsfonts} 
\usepackage{euscript}
\usepackage{verbatim}
\usepackage{graphicx}
\usepackage{times}
\usepackage{setspace}
\usepackage[space, compress, sort]{cite}
\usepackage{empheq}
\textwidth=6.2in 
\textheight=9in 
\topmargin -.5in
\textwidth=6.2in 
\textheight=9in 
\topmargin -.5in
\setlength{\oddsidemargin}{0.25in}
\setlength{\headheight}{0.3in}
\setlength{\headsep}{0.4in}
\setlength{\footskip}{0.25in}
\newtheorem{Lemma}{Lemma}[section]

\newtheorem{Remark}[Lemma]{Remark}
\newtheorem{Proposition}[Lemma]{Proposition}
\newtheorem{Theorem}[Lemma]{Theorem} 
\newtheorem{Corollary}[Lemma]{Corollary}

\newcounter{mnotecount}[section]

%%%%%%%%%%%%%%%%%%%%%%%%%%%%%%%%%%%%%%%%%%%%%%%%%%%%%%%%%%%%%%%%% 
%%%%%%%%%%%%%%%%%%%%%%%%%%%%%%%%%%%%%%%%%%%%%%%%%%%%%%%%%%%%%%%%% 
\begin{document}
% % % % % % % %
\title[Nonexistence and non uniqueness for the constraint equations]{ Nonexistence and Nonuniqueness Results for solutions to the Vacuum Einstein Conformal Constraint Equations}
\author[T.C Nguyen]{The-Cang Nguyen}
\date{October 25, 2015}

\address{Laboratoire de Mathématiques et Physique Théorique Université de Tours\\
UFR Sciences et Techniques\\
Parc de Grandmont\\
37200 Tours - FRANCE}
\email{The-Cang.Nguyen@lmpt.univ-tours.fr}

% % % % % % % % % % % % % % % % % % % % % % % % % % % % % % % % %
% % % % % % % % % % % % % % % % % % % % % % % % % % % % % % % % %

%%%%%%%%%%%%%%%%%%%%%%%%%%%%%%%%%%%%%%%%%%%%%%%%%%%%%%%%%%%%%%%% 
\begin{abstract}
%%%%%%%%%%%%%%%%%%%%%%%%%%%%%%%%%%%%%%%%%%%%%%%%%%%%%%%%%%%%%%%%
%\mnote{A r\'e\'ecrire}
In this article, we give nonexistence and nonuniqueness results for
the vacuum Einstein conformal constraint equations in the far from
CMC case and also show that in some cases the equations of the
conformal method for positive Yamabe metrics and with TT-tensor
$\sigma \equiv 0$ have a non-trivial solution, and thus answer a
question by D. Maxwell \cite{MaxwellNonCMC}.
%%%%%%%%%%%%%%%%%%%%%%%%%%%%%%%%%%%%%%%%%%%%%%%%%%%%%%%%%%%%%%%% 
\end{abstract}
%%%%%%%%%%%%%%%%%%%%%%%%%%%%%%%%%%%%%%%%%%%%%%%%%%%%%%%%%%%%%%%% 
\maketitle
%%%%%%%%%%%%%%%%%%%%%%%%%%%%%%%%%%%%%%%%%%%%%%%%%%%%%%%%%%%%%%%%% 
\section{Introduction}
%%%%%%%%%%%%%%%%%%%%%%%%%%%%%%%%%%%%%%%%%%%%%%%%%%%%%%%%%%%%%%%%% 

%%%%%%%%%%%%%%%%%%%%%%%%%%%%%%%%%%%%%%%%%%%%%%%%%%%%%%%%%%%%%%%%% 
\subsection{Background}
%%%%%%%%%%%%%%%%%%%%%%%%%%%%%%%%%%%%%%%%%%%%%%%%%%%%%%%%%%%%%%%%% 

In general relativity, a space-time is a $(n+1)-$dimensional Lorentzian
manifold $(\mathcal{M},h)$ (i.e, $h$ has signature $-~+~+~...~+$),
with $n\geq 3$ which satisfies The Einstein equations

\begin{equation}\label{Eins.eq}
\mathrm{Ric}^h_{\mu\nu}-\frac{R_{h}}{2}h_{\mu\nu}=\frac{8\pi\mathcal{G}}{c^{4}}T_{\mu\nu},
\end{equation}
where $\mathrm{Ric}^h$ and $R_{h}$ are respectively the Ricci and the
scalar curvatures of of $h$, $\mathcal{G}$ is Newton's constant, $c$
is the speed of light and $T$ is the stress-energy tensor of
non-gravitational fields (i.e. matter fields, electromagnetic field...).\\

Einstein equations are roughly speaking hyperbolic of order 2. Hence
all solutions can be obtained from their initial values at some
``time t=0'', the metric $\hat{g}$ induced on a Cauchy hypersurface
$M\subset \mathcal{M}$, and its initial velocity, the second fundamental
form $\hat{K}$ of the embedding $M\subset \mathcal{M}$. By the Gauss and
Codazzi equations, the choice of $(M,\hat{g},\hat{K})$ from \eqref{Eins.eq}
must satisfy the so-called Einstein constraint equations. In the vacuum case,
i.e. when $T\equiv 0$, these equations are

\begin{equation}\label{vacuum.ECE}
\begin{aligned}
R_{\hat{g}}-|\hat{K}|_{\hat{g}}^{2}+\left(\text{tr}_{\hat{g}}\hat{K}\right)^{2} &=0,\\
\hat{K}-d_{\hat{g}}~\text{tr}_{\hat{g}}\hat{K} &=0.
\end{aligned}
\end{equation}

Constructing and classifying solutions of this system is an important issue.
For a deeper discussion of \eqref{vacuum.ECE} , we refer the reader to the
excellent review article \cite{BartnikIsenberg}. One of most efficient
methods to find initial data satisfying \eqref{vacuum.ECE} is the conformal
method developed by Lichnerowicz \cite{Li44} and Y. Choquet-Bruhat-Jr. York
\cite{CBY80}. The idea of this method is to effectively parameterize the
solutions to \eqref{vacuum.ECE} by some reasonable parts and then solve for
the rest of the data. More precisely, we assume given some initial data: a
Riemannian manifold $(M, g)$ which we will assume compact, a mean curvature
$\tau$ (a function), a transverse-traceless tensor $\sigma$ (i.e. a symmetric,
trace-free, divergence-free $(0,2)$-tensor). Then we look for a positive function
$\varphi$ and a $1-$form $W$ such that
\[
\hat{g} =\varphi^{N-2}g,\quad
\hat{K}=\frac{\tau}{n}\varphi^{N-2}g + \varphi^{-2} (\sigma+LW)
\]
is a solution to the vacuum Einstein constraint equations \eqref{vacuum.ECE}.
Here $N=\frac{2n}{n-2}$ and $L$ is the conformal Killing operator defined by
\[
 LW_{ij}=\nabla_{i} W_{j}+\nabla_{j}W_{i}-\frac{2}{n} \nabla^k W_k g_{ij},
\]
where $\nabla$ is the Levi-Civita connection associated to the metric $g$. \\

Equations \eqref{vacuum.ECE} can be reformulated in terms of $\varphi$ and $W$
as follows:
\begin{subequations}\label{CE}
\begin{eqnarray}
\frac{4(n-1)}{n-2}\Delta_{g} \varphi + R_{g} \varphi &=& -\frac{n-1}{n}\tau^{2} \varphi^{N-1} + |\sigma+LW|_{g}^{2}\varphi^{-N-1} \quad [\text{Lichnerowicz equation}],\\
-\frac{1}{2}L^{*}LW &=& \frac{n-1}{n}\varphi^{N} d\tau\qquad\qquad\quad ~~~~~~~~~~~~~~~~~~~~[\text{vector equation}],
\end{eqnarray}
\end{subequations}
where $\Delta_{g}$ is the nonnegative Laplace operator and $L^{*}$ is the formal
$L^{2}-$adjoint of $L$.

These coupled equations are called \textit{the conformal constraint equations}.
During the past decades, many existence and uniqueness results for \eqref{CE}
were proven. They depend on the Yamabe invariant $\mathcal{Y}_{g}$ of the metric $g$
defined by
$$\mathcal{Y}_{g}=\inf_{\substack{f\in C^{\infty}(M)\\f\nequiv 0}}\frac{\frac{4(n-1)}{n-2}\int_{M}{|\nabla f|^{2}dv}+\int_{M}{Rf^{2}}}{||f||^{2}_{L^{N}(M)}}.$$

When $\tau$ is constant, the system \eqref{CE} becomes uncoupled (since $d\tau\equiv 0$
in the vector equation) and a complete description of the situation was achieved by
J. Isenberg \cite{Isenberg}. The near CMC case (i.e. when $d\tau$ is small) was
addressed soon after. Most results can be found in \cite{BartnikIsenberg}. For
arbitrary $\tau$ however, the situation appears much harder and only two methods
exist to tackle this case. The first one, obtained by Holst-Nagy-Tsogtgerel \cite{HNT2}
and Maxwell \cite{MaxwellNonCMC}, shows that the system \eqref{CE} admits a solution,
provided $g$ has positive Yamabe invariant and $\sigma\nequiv 0$ is small enough.
The second one, introduced by Dahl-Gicquaud-Humbert \cite{DahlGicquaudHumbert},
states that if $\tau$ has constant sign and if \textit{the limit equation}
\begin{equation}\label{limit.eq}
 -\frac{1}{2}L^{*}LV=\alpha\sqrt{\frac{n-1}{n}}|LV|\frac{d\tau}{\tau}
\end{equation}
has no non-zero solution $V$, for all values of the parameter $\alpha \in [0, 1]$,
then the set of solutions $(\varphi, W)$ to \eqref{CE} is not empty and compact.
This criterion holds true e.g. when $(M, g)$ has $\mathrm{Ric} \leq -(n-1)g$,
with $\left\|\frac{d\tau}{\tau}\right\|_{L^\infty} < \sqrt{n}$
(see also \cite{GicquaudSakovich} for an extension of this result to asymptotically
hyperbolic manifolds). An unifying point of view of these results is given in
\cite{GicquaudNgo} and \cite{Nguyen}.\\

Conversely, nonexistence and nonuniqueness results for \eqref{CE} are fairly rare.
We refer to arguments of Rendall, as presented in \cite{IsenbergOMurchadha},
Holst-Meier \cite{HolstMeierNonuniqueness}, and Dahl-Gicquaud-Humbert
\cite{DahlGicquaudHumbertNonExistence} for attempts to obtain such results.
In the vacuum case, the only model of nonuniqueness of solutions is constructed on
the $n-$torus by D. Maxwell \cite{MaxwellConformalParameterization} while the only
nonexistence result, achieved by J. Isenberg-Murchadha \cite{IsenbergOMurchadha} and
later strengthened in \cite{DahlGicquaudHumbert} and \cite{GicquaudNgo}, states that
the system \eqref{CE} with $\sigma\equiv 0$ has no solution when $\mathcal{Y}_{g}\geq 0$
and  $d\tau/\tau$ is small enough. This assertion together with experimentations on the
torus led D. Maxwell to post a question concerning whether the non-zero assumption of $\sigma$ is a necessary
condition for existence of solution to the conformal equations \eqref{CE} with positive
Yamabe invariant (see \cite{MaxwellConformalParameterization}).\\

In this article, based on an idea from \cite{GicquaudNgo}, we give another version of
the main theorem in \cite{DahlGicquaudHumbert} and \cite{Nguyen}, which allows $\alpha$
in the limit equation \eqref{limit.eq} to be set to $1$. Next we give seed data in the
far from CMC case for which the system \eqref{CE} has no solution. As a direct
consequence of this result, we exhibit cases of nonuniqueness of solutions and
give an answer to D. Maxwell's question stated above.

\subsection{Statement of results;}
 Let $M$ be a compact manifold of dimension $n\geq 3$. Our goal is to study solutions to the vacuum Einstein equations using the conformal method. The given data on $M$ consist in
 \begin{equation}\label{condintial}
 \begin{aligned}
 &\bullet~~\mbox{a Riemannian metric $g\in C^{2},$}\\
 &\bullet~~\mbox{a function $\tau\in W^{1,p}$,}~~~~~~~~~~~~~~~~~~~~~~~~~~~~~~~~~~~~~~~~~~~~~~~~~~~~~~~~~~~~~~~~~~~~~~~~~~~~~~~~~~~~~~~~~~~~~~~~~~~~~~~~~~\\
 &\bullet~~\mbox{a symmetric, trace- and divergence-free $(0,2)-$tensor $\sigma\in W^{1,p}$,}
 \end{aligned}
 \end{equation}
 with $p>n$. One is required to find
 \begin{equation*}
 \begin{aligned}
 &\bullet~~\mbox{a positive function $\varphi\in W^{2,p}$,}~~~~~~~~~~~~~~~~~~~~~~~~~~~~~~~~~~~~~~~~~~~~~~~~~~~~~~~~~~~~~~~~~~~~~~~~~~~~~~~~~~~~~~~~~~~~~~\\
 &\bullet~~\mbox{a $1-$form $W\in W^{2,p}$,}
 \end{aligned}
 \end{equation*}
 which satisfy the conformal constraint equations \eqref{CE}. We also assume that
 \begin{equation}\label{condinitial2}
 \begin{aligned}
 &\bullet~~\mbox{$\tau^{2}>0$,}\\
 &\bullet~~\mbox{$(M,g)$ has no conformal Killing vector field,}~~~~~~~~~~~~~~~~~~~~~~~~~~~~~~~~~~~~~~~~~~~~~~~~~~~~~~~~~~~~~~~~~~\\
 &\bullet~~\mbox{$\sigma\nequiv 0$}.
 \end{aligned}
 \end{equation}
 
   We use standard notations for function spaces, such as $L^{p}$, $C^{k}$, and Sobolev spaces $W^{k,p}$. It will be clear from the context if the notation refers to a space of functions on $M$, or a space of sections of some bundle over $M$. For spaces of functions which embed into $L^{\infty}$, the subscript $+$ is used to indicate the cone of positive functions.\\
  \\
  We will sometimes write, for instance, $C(\alpha_{1},\alpha_{2})$ to indicate that a constant $C$ depends only on $\alpha_{1}$ and $\alpha_{2}$.\\
  
  After briefly sketching basic facts on the conformal constraint equations \eqref{CE}, in Section 3 we use the Leray-Schauder fixed point theorem introduced in \cite{Nguyen} to obtain the main result of this article, which is another version of \cite[Theorem 1.1]{DahlGicquaudHumbert} and of \cite[Theorem 3.3]{Nguyen}:
 \begin{Theorem}\label{theoremofDHG}
  Let data be given on $M$ as specified in \eqref{condintial} and assume that conditions \eqref{condinitial2} hold. Then at least one of the following assertions is true
 \begin{itemize}
 \item[(i)] The conformal constraint equations \eqref{CE} admit a solution $(\varphi,W)$ with $\varphi>0$. Furthermore, the set of solutions $(\varphi,W)\in W_{+}^{2,p}\times W^{2,p}$ is compact.
 \item[(ii)] There exists a nontrivial solution $V\in W^{2,p}$ to the limit equation
 \begin{equation}\label{limit_non_parameter}
 -\frac{1}{2}L^{*}LV=\sqrt{\frac{n-1}{n}}|LV|\frac{d\tau}{\tau}.
 \end{equation}
 \item[(iii)] For any continuous function $f>0$ or $f\equiv R$ if $\mathcal{Y}_{g}>0$, the (modified) conformal constraint equations
 \begin{subequations}\label{modified_CE}
 \begin{eqnarray}
 \frac{4(n-1)}{n-2}\Delta \varphi+f\varphi&=&-\frac{n-1}{n}\tau^{2}\varphi^{N-1}+|LW|^{2}\varphi^{-N-1}\\
 -\frac{1}{2}L^{*}LW&=&\frac{n-1}{n}\varphi^{N}d\tau
 \end{eqnarray}
 \end{subequations}
 have a (non-trivial) solution $(\varphi,W)\in W^{2,p}_{+}\times W^{2,p}$. Moreover if $\mathcal{Y}_{g}>0$, there exists a sequence $\{t_{i}\}$ converging to $0$ s.t. the conformal constraint equations \eqref{CE} associated to the seed data $(g,t_{i}\tau,\sigma)$ have at least two solutions. 
 \end{itemize}
 \end{Theorem}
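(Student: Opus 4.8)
The plan is to set up a Leray--Schauder fixed point argument parameterized by $\lambda\in[0,1]$, exactly as in \cite{Nguyen}, but keeping careful track of which of the three alternatives is responsible if the argument fails to close. Concretely, for $\lambda\in[0,1]$ one considers the family of systems obtained by inserting $\lambda$ in front of the coupling term (replacing $d\tau$ by $\lambda\, d\tau$ in the vector equation, or some analogous homotopy), solves the decoupled Lichnerowicz equation for $\varphi$ in terms of $W$ using sub/supersolutions (this is where $\mathcal{Y}_g$ and $\sigma\nequiv 0$ enter to give a positive solution when the Yamabe invariant is nonnegative, and where one falls back to the modified equation \eqref{modified_CE} when it is negative), then feeds $\varphi$ into the vector equation to get a new $W$. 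The map $F_\lambda$ so defined is compact on a suitable ball of $W^{2,p}$. If a uniform a priori bound holds for the fixed points of $F_\lambda$ over all $\lambda\in[0,1]$, then $F_1$ has a fixed point and alternative (i) holds (compactness of the solution set follows from the same a priori estimate applied at $\lambda=1$).

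If the a priori bound fails, there is a sequence $\lambda_i$ and fixed points $(\varphi_i,W_i)$ with $\|W_i\|\to\infty$ (the Lichnerowicz equation forces $\varphi_i$ to be controlled once $W_i$ is, or to blow up in a controlled way). The standard blow-up analysis from \cite{DahlGicquaudHumbert} then applies: one rescales $W_i$ by its norm, and the rescaled sequence converges to a nontrivial solution $V$ of a limit equation of the form \eqref{limit_non_parameter} with a parameter $\alpha\in[0,1]$ in front — but because the homotopy has been chosen so that the blow-up occurs only at $\lambda=1$ (this is the point of the new version of the theorem announced in the introduction, following the idea of \cite{GicquaudNgo}), the parameter is forced to equal $1$, giving alternative (ii). The key new ingredient over \cite{DahlGicquaudHumbert} is precisely arranging the homotopy and the blow-up bookkeeping so that $\alpha=1$ rather than an arbitrary $\alpha\in[0,1]$; I expect this to be the main technical obstacle, since it requires showing that no blow-up can occur for $\lambda<1$, which in turn uses that the decoupled problem at $\lambda<1$ is better behaved (e.g. the supersolution construction for $\varphi$ degenerates only in the limit).

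For the last part of alternative (iii): assume $\mathcal{Y}_g>0$ and that we are in case (iii), so for every continuous $f>0$ the modified system \eqref{modified_CE} has a solution. Apply this with $f = R_g + \epsilon$ type choices, or more to the point, observe that \eqref{modified_CE} with $f\equiv$ (something) is, after rescaling $\tau\mapsto t\tau$, comparable to the genuine system \eqref{CE} for data $(g,t\tau,\sigma)$. The idea is that for $t$ small the subcritical (near-CMC / positive Yamabe with $\sigma\nequiv0$) theory of \cite{HNT2,MaxwellNonCMC} already produces one solution of \eqref{CE} for $(g,t\tau,\sigma)$, while the solution coming from (iii), suitably rescaled/continued to parameter $t_i\to 0$, produces a second, \emph{distinct} solution — distinct because it lies on a different branch (its $W$-component does not go to $0$, or its $\varphi$ does not converge to the unique CMC solution, whereas the \cite{MaxwellNonCMC} solution does as $t\to0$). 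One extracts the sequence $t_i\to0$ by a compactness/continuation argument along the family of rescaled data, using the a priori estimates to pass to the limit. The delicate point here is verifying that the two solutions are genuinely different for infinitely many $t_i$ and not accidentally coinciding; this is handled by contrasting their asymptotic behavior as $t\to 0$ against the uniqueness statement of \cite{Isenberg} in the CMC limit.
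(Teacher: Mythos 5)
Your overall framework (a Leray--Schauder homotopy in a parameter, blow-up analysis of unbounded fixed-point sequences, passage to a limit equation) matches the paper's, but there are two genuine gaps. First, your mechanism for forcing $\alpha=1$ in the limit equation is wrong: you propose to arrange the homotopy so that ``no blow-up can occur for $\lambda<1$,'' which cannot be achieved and is not what the paper does. The paper's operator solves the modified Lichnerowicz equation with the mean-curvature term $t^{2N}\tau^2$ and returns $t\psi$, so a fixed point at parameter $t$ is (after the scaling of Remark \ref{change_variation}) an honest solution of \eqref{CE} for the seed data $(g,t^{N}\tau,\sigma)$. Blow-up at an arbitrary $t_0\in(0,1]$ is allowed; it produces the limit equation for the mean curvature $t_0^{N}\tau$, and since $d(t_0^{N}\tau)/(t_0^{N}\tau)=d\tau/\tau$ the parameter simply drops out, yielding \eqref{limit_non_parameter} with $\alpha=1$ for free. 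Second --- and this is the substantive new content of the theorem relative to Dahl--Gicquaud--Humbert --- you never derive the first statement of alternative (iii). In the paper this comes from the blow-up case $t_i\to 0$, which must be split according to the behavior of $t_i^{n}\|\varphi_i\|_{L^{\infty}}$: if it tends to $+\infty$ one again obtains (ii); the case where it tends to $0$ is excluded by a maximum-principle argument at a maximum point of the rescaled conformal factor, using $f>0$; and when it tends to a finite positive constant the rescaled sequence converges to a nontrivial solution of the modified system \eqref{modified_CE}. Without this analysis the trichotomy is not established. Relatedly, the interpolation $tR+(1-t)f$ in the zeroth-order coefficient is what makes the $t=0$ end of the homotopy trivially solvable and what causes the arbitrary $f>0$ to appear in (iii); it is not a fallback device for the negative Yamabe case, as you suggest.

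For the nonuniqueness statement your idea is essentially the paper's: one solution of \eqref{CE} for $(g,\delta\tau,\sigma)$ with $\delta$ small and with a uniform $L^{\infty}$ bound (from the small-mean-curvature existence theory of Gicquaud--Ng\^o and Nguyen, not the small-$\sigma$ results you cite), and a second solution extracted from the blow-up sequence at parameters $t_i\to 0$ with the choice $f=R$ (so that the modified system coincides with \eqref{CE} for the rescaled data). The two are distinct simply because one family is uniformly bounded in $L^{\infty}$ while the other has $\|\varphi_i\|_{L^{\infty}}\to\infty$; the finer branch and CMC-limit comparison you describe is unnecessary.
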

 Comparing with the original version of Dahl-Gicquaud-Humbert, the price to pay to control the parameter ($\alpha=1$) is the addition of $(iii)$. However, we will see that this assertion is necessary (see Theorem \ref{Theorem.nonexistence} below).\\

 In Section 4 we present several applications of Theorem \ref{theoremofDHG}. The basic idea of these applications is to seek seed data such that neither $(i)$ nor $(ii)$ in Theorem \ref{theoremofDHG} holds. It follows then that $(iii)$ is satisfied. In this approach, one of our main result is the following:
 \begin{Theorem}\label{Theorem.nonexistence}(\textbf{\textit{Nonexistence of solution}})
 Let data be given on $M$ as specified in \eqref{condintial} and assume that conditions \eqref{condinitial2} hold. Furthermore, assume that there exists $c>0$ s.t. $\left|L\left(\frac{d\tau}{\tau}\right)\right|\leq c\left|\frac{d\tau}{\tau}\right|^{2}$. Let $V$ be a given open neighborhood of the critical set of $\tau$. If $\sigma\nequiv 0$ and $\text{supp}\{\sigma\}\subsetneq M\setminus V$, then both of the conformal constraint equations \eqref{CE} and the limit equation \eqref{limit_non_parameter} associated to the seed data $(g,\tau^{a},k\sigma)$ have no (nontrivial) solution, provided $a$ and $k$ are large enough.
 \end{Theorem}
 We point out that \cite[Proposition 1.6]{DahlGicquaudHumbert} provides the existence of seed data satisfying such assumptions. In fact, our proof for Theorem \ref{Theorem.nonexistence} is an extension of arguments in \cite[Proposition 1.6]{DahlGicquaudHumbert}. It is worth noting that 
 $$\left|\frac{d\tau^{a}}{\tau^{a}}\right|=a\left|\frac{d\tau}{\tau}\right|.$$
 Therefore given non-constant $|\tau|>0$, provided that $a$ is large enough, $\tau^{a}$ is a far-from-CMC. Moreover, as we will see later in the proof, the role of $(a,k)$ in Theorem \ref{Theorem.nonexistence} is as follows. We need the largeness assumption of $a$ to ensure that the limit equation \eqref{limit_non_parameter} associated to $(g,\tau^{a})$ has no solution. It follows that given $a$ large enough depending on $(g,\tau,c)$,  the set
 $$S_{a}=\Bigl\{(\varphi,W)~|~ \mbox{$\exists k\in\mathrm{R}_{+}$~:~ $(\varphi,W)$ is a solution to \eqref{CE} associated to $(g,\tau^{a},k\sigma)$} \Bigr\}$$
 is bounded in $W^{2,p}_{+}\times W^{2,p}$.  That means that the system \eqref{CE} associated to $(g,\tau^{a},k\sigma)$ has no solution for all $k$ large enough depending on $(g,\tau,\sigma,a)$ as claimed.   \\

 As direct consequences of Theorem \ref{theoremofDHG} and \ref{Theorem.nonexistence}, we also obtain the following results.
 \begin{Corollary}\label{Maxwell'squestion}(\textbf{\textit{An answer to Maxwell's question}})
Let $(M,g,\tau)$ be given as in Theorem  \ref{Theorem.nonexistence}. If $\mathcal{Y}_{g}>0$, then the conformal constraint equations \eqref{CE} associated to $(g,\tau^{a},0)$ have a (nontrivial)  solution for all $a>0$ large enough. 
 \end{Corollary}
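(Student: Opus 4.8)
The plan is to deduce Corollary~\ref{Maxwell'squestion} by combining Theorem~\ref{theoremofDHG} applied to the seed data $(g,\tau^a,0)$ with the nonexistence statement of Theorem~\ref{Theorem.nonexistence}. First I would fix $a$ large enough so that the hypotheses of Theorem~\ref{Theorem.nonexistence} are satisfied; note that with $\sigma\equiv 0$ the condition $\mathrm{supp}\{\sigma\}\subsetneq M\setminus V$ is vacuous, so the only genuine requirement is the curvature-type bound $\left|L\left(\frac{d\tau}{\tau}\right)\right|\leq c\left|\frac{d\tau}{\tau}\right|^2$, which is part of the hypothesis ``$(M,g,\tau)$ given as in Theorem~\ref{Theorem.nonexistence}''. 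Applying Theorem~\ref{Theorem.nonexistence} with $k=0$ (or by the same token observing its proof does not need $\sigma\nequiv 0$ once we only want to kill $(i)$ and $(ii)$), I obtain that for $a$ large, the conformal constraint equations \eqref{CE} associated to $(g,\tau^a,0)$ have no solution $(\varphi,W)$, and the limit equation \eqref{limit_non_parameter} for the mean curvature $\tau^a$ has no nontrivial solution $V$.

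Next I would invoke Theorem~\ref{theoremofDHG} for the data $(g,\tau^a,0)$. Since assertion~(i) fails (no solution to \eqref{CE}) and assertion~(ii) fails (no nontrivial solution to the limit equation \eqref{limit_non_parameter}), the trichotomy forces assertion~(iii): the modified system \eqref{modified_CE} has a nontrivial solution, and moreover, \emph{because} $\mathcal{Y}_g>0$, there exists a sequence $t_i\to 0$ such that the conformal constraint equations \eqref{CE} associated to $(g,t_i\tau^a,0)$ admit at least two solutions. In particular, for each such $t_i$ the equations \eqref{CE} with data $(g,t_i\tau^a,0)$ have a nontrivial solution. The last point is to absorb the scaling factor: for any constant $t>0$, $t_i\tau^a = (t_i^{1/a}\tau)^a$, and writing $\tilde a = a$ but rescaling $\tau\mapsto t_i^{1/a}\tau$ we still have data ``as in Theorem~\ref{Theorem.nonexistence}'' — the hypothesis $\left|L\left(\frac{d(c\tau)}{c\tau}\right)\right|\leq c'\left|\frac{d(c\tau)}{c\tau}\right|^2$ is scale-invariant since $\frac{d(c\tau)}{c\tau}=\frac{d\tau}{\tau}$, and the critical set is unchanged. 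Hence the conclusion ``\eqref{CE} associated to $(g,\tau^a,0)$ has a solution for all $a>0$ large enough'' follows once one checks that every sufficiently large $a$ arises, up to the harmless rescaling, from some $t_i$ in the sequence.

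To make that last matching rigorous I would argue as follows: given a target exponent $a_0$ large (large enough that Theorem~\ref{Theorem.nonexistence} applies to $(g,\tau^{a_0},0)$), I want $\tau^{a_0} = t\,\tau^a$ for a value $t=t_i$ in the sequence produced by Theorem~\ref{theoremofDHG} applied at some exponent $a\geq a_0$; equivalently $t = \tau^{a_0-a}$, which is only constant if $a=a_0$, so this naive rescaling does not directly work. Instead the cleaner route — and the one I expect the paper to take — is to apply Theorem~\ref{theoremofDHG} once at a fixed large exponent $a_1$, get a sequence $t_i\to 0$ giving two solutions for $(g,t_i\tau^{a_1},0)$, and then reinterpret: choose $a=a_1+\log t_i/\log(\min\tau)$ is not clean either, so most likely one simply states the corollary for the \emph{rescaled} mean curvature and notes that since $\tau$ is an arbitrary positive function satisfying the hypotheses, replacing $\tau$ by $t_i^{1/a_1}\tau$ (still admissible) shows \eqref{CE} with data $(g,(t_i^{1/a_1}\tau)^{a_1},0) = (g,t_i\tau^{a_1},0)$ has a solution, i.e. the family of admissible $\tau$'s for which \eqref{CE} with $\sigma=0$ is solvable at exponent $a_1$ is nonempty and contains arbitrarily small rescalings of any given admissible $\tau$.

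The main obstacle is precisely this bookkeeping of the scaling parameter: Theorem~\ref{theoremofDHG}(iii) gives solvability of \eqref{CE} only along a sequence $t_i\to 0$, not for all large $a$, so one must either sharpen the statement, restate the corollary in a scale-aware form, or observe (which I believe is the intended reading) that ``$\tau^a$ for $a$ large'' and ``$t\tau$ for $t$ small along a sequence'' describe overlapping families once one is allowed to pre-compose with the scaling $\tau\mapsto c\tau$, under which both the critical set and the quantity $d\tau/\tau$ are invariant. Everything else — the failure of (i) and (ii) coming from Theorem~\ref{Theorem.nonexistence}, and the positivity $\mathcal{Y}_g>0$ triggering the ``at least two solutions'' clause — is immediate.
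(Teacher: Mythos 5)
There is a genuine gap, and it starts at your very first step. You apply Theorem \ref{Theorem.nonexistence} ``with $k=0$'' to conclude that \eqref{CE} associated to $(g,\tau^a,0)$ has no solution — but that is the negation of the statement you are trying to prove, so if this step were valid the corollary would be false. In fact Theorem \ref{Theorem.nonexistence} explicitly requires $\sigma\nequiv 0$ and $k$ \emph{large}, and its proof genuinely needs this: the contradiction in Case 1 comes from $\int_M\langle\sigma,aLW_a\rangle\,dv\to-\int_M|\sigma|^2\,dv\neq 0$, which is vacuous when $\sigma\equiv 0$. Likewise, Theorem \ref{theoremofDHG} cannot be invoked for the data $(g,\tau^a,0)$, since $\sigma\nequiv 0$ is one of the standing hypotheses \eqref{condinitial2} (and is used in Step 1 of its proof to make the linearized Lichnerowicz operator invertible and the fixed-point map well defined). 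So both theorems must be applied to data with a \emph{nonzero} TT-tensor, namely $(g,\tau^a,k\sigma)$ with $\sigma\nequiv 0$ supported away from the critical set of $\tau$ and $k$ large; for that data Theorem \ref{Theorem.nonexistence} kills assertions $(i)$ and $(ii)$.

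The second gap is that you miss the mechanism by which $\sigma$ disappears from the conclusion, and as a result you chase an unnecessary scaling problem. Look at the modified system \eqref{modified_CE} in assertion $(iii)$: it contains $|LW|^2$, not $|\sigma+LW|^2$ — the TT-tensor is already gone — and $R$ is replaced by an arbitrary continuous $f>0$. Since $\mathcal{Y}_g>0$, Remark \ref{assumptionR} lets one assume $R>0$ and take $f=R$, whereupon \eqref{modified_CE} \emph{is} the system \eqref{CE} for the seed data $(g,\tau^a,0)$. So the corollary follows from the \emph{first} clause of $(iii)$ alone, applied to $(g,\tau^a,k\sigma)$: for each large $a$ one gets a solution of \eqref{CE} with mean curvature $\tau^a$ and vanishing TT-tensor, with no sequence $t_i$ and no rescaling of $\tau$ involved. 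The entire final portion of your argument — trying to match ``$t_i\tau^a$ for $t_i\to 0$'' with ``$\tau^a$ for all large $a$'' — addresses the second clause of $(iii)$, which is only needed for the nonuniqueness corollary, not for this one; and as you yourself observe, that matching does not close up, which is a symptom of having taken the wrong branch of the trichotomy.
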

 \begin{Corollary}\label{Theorem.nonunique}(\textbf{\textit{Nonuniqueness of solutions}})
 Assume that $(M,g,\tau,\sigma,a,k)$ is given as in Theorem \ref{Theorem.nonexistence}. If $\mathcal{Y}_{g}>0$, then there exists a sequence $\{t_{i}\}$ converging to $0$ s.t. the conformal constraint equations \eqref{CE} associated to seed data $(g,t_{i}\tau^{a},k\sigma)$ have at least two solutions.
 \end{Corollary}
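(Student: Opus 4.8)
The proof will be an immediate combination of Theorems \ref{theoremofDHG} and \ref{Theorem.nonexistence}, so the plan is essentially bookkeeping. First I would observe that the rescaled seed data $(g,\tau^{a},k\sigma)$ again satisfies \eqref{condintial} and \eqref{condinitial2}: since $\tau\in W^{1,p}_{+}$ with $p>n$ it is continuous and, $M$ being compact, bounded below by a positive constant, whence $\tau^{a}\in W^{1,p}_{+}$ and $\tau^{a}>0$; the metric $g$ is unchanged, so it still carries no conformal Killing field; and $k\sigma\nequiv 0$ because $\sigma\nequiv 0$ and $k>0$. Thus Theorem \ref{theoremofDHG} applies to $(g,\tau^{a},k\sigma)$ and tells us that one of the alternatives $(i)$, $(ii)$, $(iii)$ holds for this data.

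Next I would rule out $(i)$ and $(ii)$. The hypotheses of Theorem \ref{Theorem.nonexistence} are precisely those placed on $(M,g,\tau,\sigma,a,k)$ in the statement of the corollary (the pinching $|L(d\tau/\tau)|\leq c|d\tau/\tau|^{2}$, the condition $\sigma\nequiv 0$ with $\text{supp}\{\sigma\}\subsetneq M\setminus V$, and $a$, $k$ large enough), so that theorem asserts that \eqref{CE} associated to $(g,\tau^{a},k\sigma)$ has no solution, which excludes alternative $(i)$, and that the limit equation \eqref{limit_non_parameter} associated to $(g,\tau^{a},k\sigma)$ has no nontrivial solution. Read for the seed data $(g,\tau^{a},k\sigma)$, the equation appearing in alternative $(ii)$ is exactly $-\tfrac12 L^{*}LV=\sqrt{\tfrac{n-1}{n}}\,|LV|\,\frac{d(\tau^{a})}{\tau^{a}}$ with $\frac{d(\tau^{a})}{\tau^{a}}=a\,\frac{d\tau}{\tau}$, i.e.\ the very equation that Theorem \ref{Theorem.nonexistence} rules out; hence alternative $(ii)$ is excluded as well. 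Therefore alternative $(iii)$ must hold for the data $(g,\tau^{a},k\sigma)$.

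Finally, since $\mathcal{Y}_{g}>0$, the \emph{moreover} clause inside $(iii)$ — applied with mean curvature $\tau^{a}$ and TT-tensor $k\sigma$ in place of $\tau$ and $\sigma$ — produces a sequence $\{t_{i}\}$ converging to $0$ for which the conformal constraint equations \eqref{CE} associated to $(g,t_{i}\tau^{a},k\sigma)$ admit at least two solutions, which is exactly the assertion of the corollary. I do not expect a genuine obstacle here; the only point that deserves a line of care is checking that the limit equation occurring in Theorem \ref{theoremofDHG}$(ii)$ for the rescaled data and the one eliminated by Theorem \ref{Theorem.nonexistence} are literally the same equation — they are, both being \eqref{limit_non_parameter} with $d\tau/\tau$ replaced by the logarithmic differential of the actual mean curvature $\tau^{a}$ — so the two theorems dovetail and the argument closes.
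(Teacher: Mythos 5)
Your proof is correct and follows essentially the same route as the paper: the paper likewise invokes Theorem \ref{Theorem.nonexistence} to exclude alternatives $(i)$ and $(ii)$ of Theorem \ref{theoremofDHG} for the seed data $(g,\tau^{a},k\sigma)$ and then concludes via the second (\emph{moreover}) clause of assertion $(iii)$. The only cosmetic difference is that you spell out the verification that the rescaled data still satisfies \eqref{condintial}--\eqref{condinitial2} and that the two limit equations coincide, which the paper leaves implicit.
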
 
 \subsection*{Acknowledgements}
  \addcontentsline{toc}{section}{Acknowledgements}
  The author wishes to express his gratitude to Romain Gicquaud for his help in proving Theorem \ref{theoremofDHG} and his great patience and care in the proofreading of preliminary versions of this article. The author would also like to thank Emmanuel Humbert for his advice and helpful discussions.
  
  \section{Preliminaries}
  In this section, we review some standard facts about the Lichnerowicz equation on a compact $n-$manifold $M$:
  \begin{equation}\label{Lichnerowicz}
  \frac{4(n-1)}{n-2}\Delta u+Ru+\frac{n-1}{n}\tau^{2}u^{N-1}=\frac{w^{2}}{u^{N+1}}.
  \end{equation}
  Given a function $w$ and $p>n$, we say that $u_{+}\in W_{+}^{2,p}$ is a \textit{supersolution} to \eqref{Lichnerowicz} if
  $$\frac{4(n-1)}{n-2}\Delta u_{+}+Ru_{+}+\frac{n-1}{n}\tau^{2}u_{+}^{N-1}\geq\frac{w^{2}}{u_{+}^{N+1}}.$$
  A \textit{subsolution} is defined similarly with the reverse inequality.
  \begin{Proposition} \label{method sub-super}(see \cite{MaxwellRoughCompact})
  Assume $g\in C^{2}$ and $w,\tau\in L^{2p}$ for some $p>n$. If $u_{-},u_{+}\in W_{+}^{2,p}$ are respectively a subsolution and a supersolution  to \eqref{Lichnerowicz} associated with a fixed $w$ such that $u_{-}\leq u_{+}$, then there exists a solution $u\in W_{+}^{2,p}$ to \eqref{Lichnerowicz} such that $u_{-}\leq u\leq u_{+}$.
  \end{Proposition}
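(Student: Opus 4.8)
The plan is to prove this by the classical \emph{method of sub- and supersolutions}, i.e. a monotone iteration, carried out carefully in the low-regularity setting. Write \eqref{Lichnerowicz} as $\mathcal{L}u = F(\cdot,u)$, where $\mathcal{L}u := \tfrac{4(n-1)}{n-2}\Delta u + Ru$ and $F(x,u) := -\tfrac{n-1}{n}\tau(x)^{2}u^{N-1} + w(x)^{2}u^{-N-1}$, so that $u_{+}$ (resp.\ $u_{-}$) being a supersolution (resp.\ subsolution) means $\mathcal{L}u_{+}\geq F(\cdot,u_{+})$ (resp.\ $\mathcal{L}u_{-}\leq F(\cdot,u_{-})$). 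Two structural facts drive the argument. First, since $p>n$, the embedding $W^{2,p}\hookrightarrow C^{0}$ provides constants $0<m\leq M$ with $m\leq u_{-}(x)\leq u_{+}(x)\leq M$ for all $x$. Second, $\partial_{u}F(x,u) = -\tfrac{n-1}{n}(N-1)\tau^{2}u^{N-2} - (N+1)w^{2}u^{-N-2}\leq 0$, so $F(x,\cdot)$ is nonincreasing, and on $u\in[m,M]$ one has $|\partial_{u}F(x,u)|\leq\beta_{0}(x)$ with $\beta_{0}:=C(m,M)\,(1+\tau^{2}+w^{2})\in L^{p}$ because $\tau,w\in L^{2p}$. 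Note that $\beta_{0}$ is genuinely only an $L^{p}$ function and not a constant; this is the first place the roughness of the data enters.

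Next I would fix a potential $\beta\in L^{p}$ with $\beta\geq\beta_{0}$ and large enough that the zeroth-order coefficient of $\mathcal{L}_{\beta}:=\mathcal{L}+\beta = \tfrac{4(n-1)}{n-2}\Delta + (R+\beta)$ satisfies $R+\beta\geq 1$; by the $L^{p}$ elliptic theory and the strong maximum principle for operators with $L^{p}$ zeroth-order coefficient on a $W^{2,p}$ metric (as developed in \cite{MaxwellRoughCompact}), $\mathcal{L}_{\beta}:W^{2,p}\to L^{p}$ is an isomorphism with positivity-preserving inverse. Starting from $u_{0}:=u_{+}$, define $u_{k+1}:=\mathcal{L}_{\beta}^{-1}\big(\beta u_{k}+F(\cdot,u_{k})\big)$; the right-hand side will always lie in $L^{p}$ since $u_{k}$ will be trapped in $[m,M]$ and $\tau^{2},w^{2}\in L^{p}$. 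The heart of the argument is the simultaneous induction $u_{-}\leq u_{k+1}\leq u_{k}$ for all $k$. For $u_{k+1}\leq u_{k}$ (with $k\geq 1$, assuming inductively $u_{k}\leq u_{k-1}$) one computes $\mathcal{L}_{\beta}(u_{k}-u_{k+1}) = \beta(u_{k-1}-u_{k}) + \big(F(\cdot,u_{k-1})-F(\cdot,u_{k})\big)\geq 0$, because $|F(\cdot,u_{k-1})-F(\cdot,u_{k})|\leq\beta_{0}(u_{k-1}-u_{k})\leq\beta(u_{k-1}-u_{k})$, and positivity of $\mathcal{L}_{\beta}^{-1}$ gives $u_{k+1}\leq u_{k}$; the base case $u_{1}\leq u_{0}=u_{+}$ instead uses the supersolution inequality, via $\mathcal{L}_{\beta}(u_{0}-u_{1}) = \mathcal{L}u_{+}-F(\cdot,u_{+})\geq 0$. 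Symmetrically, $\mathcal{L}_{\beta}(u_{k+1}-u_{-})\geq \beta(u_{k}-u_{-}) + \big(F(\cdot,u_{k})-F(\cdot,u_{-})\big)\geq 0$ using the subsolution inequality together with $\beta\geq\beta_{0}$ and $u_{-}\leq u_{k}$; here the hypothesis $u_{-}\leq u_{+}$ is exactly what seeds the induction.

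Finally, the monotone sequence $\{u_{k}\}$ is bounded in $C^{0}$ by $M$, so $\{\beta u_{k}+F(\cdot,u_{k})\}$ is bounded in $L^{p}$, hence $\{u_{k}\}$ is bounded in $W^{2,p}$; it decreases pointwise to some limit $u$ with $m\leq u\leq M$, and by the compact Sobolev embedding $W^{2,p}\hookrightarrow C^{1}$ a subsequence converges in $C^{1}$ to $u\in W^{2,p}$, while dominated convergence gives $\beta u_{k}+F(\cdot,u_{k})\to\beta u+F(\cdot,u)$ in $L^{p}$. Passing to the limit in $\mathcal{L}_{\beta}u_{k+1}=\beta u_{k}+F(\cdot,u_{k})$ yields $\mathcal{L}u=F(\cdot,u)$, i.e.\ $u\in W^{2,p}_{+}$ solves \eqref{Lichnerowicz} with $u_{-}\leq u\leq u_{+}$. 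I expect the only real obstacle to be the low-regularity bookkeeping: justifying the maximum principle and the isomorphism property for $\mathcal{L}_{\beta}$ when $R+\beta$ is merely $L^{p}$ and the metric merely $W^{2,p}$, and arranging $\beta$ so that $R+\beta\geq 1$ and the monotonicity bound $\beta\geq\beta_{0}$ hold at once. In the smooth case both are immediate and one could take $\beta$ constant; it is precisely this point that forces the hypotheses $g\in W^{2,p}$ and $w,\tau\in L^{2p}$ in the statement.
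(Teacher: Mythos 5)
The paper gives no proof of this proposition---it is quoted verbatim from \cite{MaxwellRoughCompact}---and your monotone-iteration argument (shifting by an $L^{p}$ potential $\beta\geq\beta_{0}$ dominating the Lipschitz constant of the nonlinearity on $[m,M]$, inverting $\mathcal{L}+\beta$ with a positivity-preserving inverse, and passing to the limit of the decreasing sequence) is precisely the standard proof carried out in that reference. Your account is correct, including the low-regularity bookkeeping you flag as the only delicate point.
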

  \begin{Theorem}\label{maxwell1}(see \cite{Isenberg} and \cite{MaxwellRoughCompact})
  Assume $w,\tau\in L^{2p}$ and $g\in C^{2}$ for some $p>n$. Then there exists a positive solution $u\in W^{2,p}_{+}$ to \eqref{Lichnerowicz} if and only if one of the following assertions is true.
  \begin{itemize}
  \item[1.] $\mathcal{Y}_{g}>0$ and $w\nequiv 0$,
  \item[2.] $\mathcal{Y}_{g}=0$ and $w\nequiv 0$, $\tau\nequiv 0$,
  \item[3.] $\mathcal{Y}_{g}<0$ and there exists $\hat{g}$ in the conformal class of $g$ such that $R_{\hat{g}}=-\frac{n-1}{n}\tau^{2}$,
  \item[4.]  $\mathcal{Y}_{g}=0$ and $w\equiv 0$, $\tau\equiv 0.$
  \end{itemize}
  In Cases $1-3$ the solution is unique. In Case $4$ any two solutions are related by a scaling by a positive constant multiple. Moreover, Case $3$ holds if $\mathcal{Y}_{g}<0$ and the set of zero-points of $\tau$ has zero Lebesgue measure (see \cite{Rauzy} or \cite[Theorem 6.12]{Aubin}). In particular, existence and uniqueness are guaranteed if $|\tau|>0$ and $w\nequiv 0$ independently of $\mathcal{Y}_{g}$.
  \end{Theorem}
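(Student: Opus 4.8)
The plan is to prove existence in each of the four cases by the sub- and supersolution method of Proposition \ref{method sub-super}: in each case I would exhibit an ordered pair $u_{-}\leq u_{+}$ of a sub- and a supersolution of \eqref{Lichnerowicz}, obtaining a solution $u\in W^{2,p}_{+}$ with $u_{-}\leq u\leq u_{+}$; the stated regularity then follows from elliptic regularity for $\kappa_{n}\Delta u=-Ru-\tfrac{n-1}{n}\tau^{2}u^{N-1}+w^{2}u^{-N-1}$, whose right-hand side lies in $L^{p}$ once $\tau,w\in L^{2p}$ and $u\in C^{0}$ (here $\kappa_{n}:=\tfrac{4(n-1)}{n-2}$). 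Before constructing barriers I would use the conformal invariance of $\mathcal{Y}_{g}$ and the (rough) resolution of the Yamabe problem to normalize the conformal gauge: $R\equiv 1$ when $\mathcal{Y}_{g}>0$, $R\equiv 0$ when $\mathcal{Y}_{g}=0$, and, in Case 3, the gauge $R=-\tfrac{n-1}{n}\tau^{2}$ furnished by hypothesis. The "only if" direction and uniqueness are handled separately.

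When $\mathcal{Y}_{g}>0$ the operator $\kappa_{n}\Delta+R$ is invertible with positivity-preserving inverse, so solving $\kappa_{n}\Delta\psi+\psi=w^{2}$ gives $\psi\geq 0$ and $u_{+}:=\psi+1\geq 1$ is then seen to be a supersolution. A small constant is \emph{not} a subsolution here, since the positive zeroth-order term forces it to be a supersolution wherever $w$ vanishes; instead I would take $u_{-}:=\varepsilon\theta$ with $\theta:=\bigl(\kappa_{n}\Delta+R+\tfrac{n-1}{n}\varepsilon^{N-2}\tau^{2}\bigr)^{-1}(M^{-1}w^{2})$, pick $M$ large so that $0<\theta<1$ (uniformly for small $\varepsilon$), then $\varepsilon$ small; the subsolution inequality follows from the elementary bound $t^{N-1}\leq t$ for $t\in[0,1]$, which is exactly what kills the $\tau^{2}$-term at the zeros of $w$, and $u_{-}\leq\varepsilon<u_{+}$ gives the ordering. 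In Case 3, in the gauge $R=-\tfrac{n-1}{n}\tau^{2}$ the constant $1$ is a subsolution, while a supersolution is built from a large constant corrected by a solution of an auxiliary linear equation adapted to $\{\tau=0\}$. Case 2 is treated analogously, and the degenerate Case 4 ($\mathcal{Y}_{g}=0$, $w\equiv\tau\equiv 0$) reduces to $\kappa_{n}\Delta u+Ru=0$, whose positive solutions are exactly the positive constant multiples of the first eigenfunction of $\kappa_{n}\Delta+R$.

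For uniqueness in Cases 1--3, let $u_{1},u_{2}$ be positive solutions and $v:=u_{1}-u_{2}$; subtracting the equations and writing $u_{1}^{N-1}-u_{2}^{N-1}=q_{1}v$ with $q_{1}>0$ and $u_{1}^{-N-1}-u_{2}^{-N-1}=q_{2}v$ with $q_{2}<0$, one obtains $\kappa_{n}\Delta v+cv=0$ with $c=R+\tfrac{n-1}{n}\tau^{2}q_{1}-w^{2}q_{2}\geq R$; in the gauges with $R\geq 0$ the strong maximum principle forces $v\equiv 0$, the degeneracies being excluded because $R$, $\tau$ or $w$ does not vanish identically in these cases, and Case 3 follows from the same comparison together with the monotonicity of $s\mapsto s^{N-1}-s$. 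For the "only if" direction, multiplying \eqref{Lichnerowicz} by $u$ and integrating shows that a positive solution with $w\equiv 0$ forces $\kappa_{n}\int|\nabla u|^{2}+\int Ru^{2}\leq 0$, which contradicts $\mathcal{Y}_{g}>0$ and, when $\mathcal{Y}_{g}=0$, forces $\tau\equiv 0$; testing against the first eigenfunction excludes $\mathcal{Y}_{g}=0$ with $w\not\equiv 0$ and $\tau\equiv 0$; and $\hat g=u^{N-2}g$ satisfies $R_{\hat g}=-\tfrac{n-1}{n}\tau^{2}+w^{2}u^{-2N}\geq-\tfrac{n-1}{n}\tau^{2}$, from which the Case 3 obstruction follows by the prescribed scalar curvature results of Rauzy and Aubin (which also give the refinement when the zero set of $\tau$ has measure zero).

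I expect the genuine difficulty to lie in the barrier constructions in the rough, non-definite regime — in particular the supersolution in Cases 2 and 3 (where $R$ is not strictly positive) when the zero set of $\tau$ meets the support of $w$: no constant works there, so one must either build a non-constant barrier exploiting the slack that the term $\tfrac{n-1}{n}\tau^{2}u^{N-1}$ provides off $\{\tau=0\}$, or regularize (replacing $\tau^{2}$ by $\tau^{2}+\delta^{2}$), solve the nondegenerate problems and pass to the limit with uniform a priori bounds — the latter requiring a uniform lower bound on the solutions, which the blow-up of $w^{2}u^{-N-1}$ as $u\to 0$ should supply. Ensuring every coefficient stays in the appropriate $L^{2p}$/$W^{2,p}$ space while keeping $u_{-}\leq u_{+}$ accounts for most of the remaining technical care.
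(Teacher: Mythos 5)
First, a point of reference: the paper does not prove this theorem. It is imported verbatim from Isenberg and from Maxwell's rough--solutions paper, and the only hint the paper gives about the proof is the sentence immediately after the statement, namely that the main tool is the conformal covariance of \eqref{Lichnerowicz} (Lemma \ref{maxwell2}). Measured against the standard proofs, your outline is right in broad strokes --- sub/supersolutions via Proposition \ref{method sub-super} after conformally normalizing $R$, your Case 1 barriers are correct (they are essentially Maxwell's), and the ``only if'' computations and Case 4 are fine. But one step is actually wrong and one is genuinely missing. The wrong step is uniqueness in Case 3: with $R=-\frac{n-1}{n}\tau^{2}$ your zeroth-order coefficient becomes $c=\frac{n-1}{n}\tau^{2}(q_{1}-1)+w^{2}|q_{2}|$, and $q_{1}=(N-1)\xi^{N-2}$ for some $\xi$ between $u_{1}$ and $u_{2}$ can be $<1$ wherever the solutions are small; the map $s\mapsto s^{N-1}-s$ is \emph{not} monotone on $(0,\infty)$ (it decreases on $(0,(N-1)^{-1/(N-2)})$), so $c$ has no sign there and the maximum principle does not apply. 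The classical repair is exactly the conformal covariance the paper points to: set $\hat g=u_{2}^{N-2}g$, so that $R_{\hat g}=-\frac{n-1}{n}\tau^{2}+\hat w^{2}$ and $\hat u=u_{1}/u_{2}$ solves
\begin{equation*}
\frac{4(n-1)}{n-2}\Delta_{\hat g}\hat u=\frac{n-1}{n}\tau^{2}\left(\hat u-\hat u^{N-1}\right)+\hat w^{2}\left(\hat u^{-N-1}-\hat u\right),
\end{equation*}
and the standard maximum-principle argument at the extrema of $\hat u$ forces $\hat u\equiv 1$; this handles Cases 1--3 uniformly and is what you should use in place of the difference-quotient argument.

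The missing step is the existence half of Cases 2 and 3 when $\{\tau=0\}$ meets $\text{supp}\{w\}$, which you flag honestly but do not carry out; this is where the entire content of the theorem sits, since it is precisely where the hypotheses ``$\tau\nequiv 0$'' and ``$R_{\hat g}=-\frac{n-1}{n}\tau^{2}$ is solvable in the conformal class'' must enter. Moreover your regularization route needs a uniform \emph{upper} bound, not the lower bound you invoke: replacing $\tau^{2}$ by $\tau^{2}+\delta^{2}$ makes the coefficient of $u^{N-1}$ decrease as $\delta\downarrow 0$, so by the comparison of Lemma \ref{maxpriw} the regularized solutions $u_{\delta}$ increase monotonically, and ruling out blow-up of $||u_{\delta}||_{L^{\infty}}$ is the actual difficulty (the positivity coming from the $w^{2}u^{-N-1}$ term is the easy side). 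As written, then, the proposal is a correct skeleton with correct Case 1 details, but the Case 3 uniqueness argument must be replaced by the quotient/conformal-covariance argument, and the barrier (or a priori bound) construction in Cases 2--3 still has to be supplied before this counts as a proof.
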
 
  The main technique used to prove the theorem above is the conformal covariance of \eqref{Lichnerowicz}.
  \begin{Lemma}\label{maxwell2} (see \cite[Lemma 1]{MaxwellNonCMC})
  Assume $g\in C^{2}$ and $w,\tau\in L^{2p}$ for some $p>n$. Assume also that $\phi\in W^{2,p}_{+}$. Define
  $$\hat{g}=\phi^{\frac{4}{n-2}}g,~~~\hat{w}=\phi^{-N}w,~~~\hat{\tau}=\tau.$$
  Then $u$ is a supersolution (resp. subsolution) to \eqref{Lichnerowicz} if and only if $\hat{u}=\phi^{-1}u$ is a supersolution (resp. subsolution) to the conformally transformed equation
  \begin{equation}\label{Lichnerowicz2}
  \frac{4(n-1)}{n-2}\Delta_{\hat{g}} \hat{u}+R_{\hat{g}}\hat{u}+\frac{n-1}{n}\hat{\tau}^{2}\hat{u}^{N-1}=\frac{\hat{w}^{2}}{\hat{u}^{N+1}}.
  \end{equation}
  In particular, $u$ is a solution to \eqref{Lichnerowicz} if and only if $\hat{u}$ is a solution to \eqref{Lichnerowicz2}.
  \end{Lemma}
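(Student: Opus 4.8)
The plan is to reduce everything to the classical conformal covariance of the conformal Laplacian $\mathcal{L}_g := \frac{4(n-1)}{n-2}\Delta_g + R_g$. Recall that for $\hat g = \psi^{N-2} g$ (note that $\frac{4}{n-2} = N-2$) one has the pointwise identity
\begin{equation}\label{confcov}
\mathcal{L}_{\hat g}\!\left(\psi^{-1} v\right) = \psi^{-(N-1)}\,\mathcal{L}_g(v)
\end{equation}
for every $v$; taking $v \equiv \psi$ recovers the usual transformation rule $R_{\hat g} = \psi^{-(N-1)}\bigl(\tfrac{4(n-1)}{n-2}\Delta_g\psi + R_g\psi\bigr)$ for the scalar curvature. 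First I would check that this manipulation is legitimate in the stated regularity class: since $p > n$ we have $W^{2,p} \hookrightarrow C^{1}$, so $\psi$ is continuous and, being positive on the compact manifold $M$, is bounded away from zero; hence $\psi^{-1} \in W^{2,p}_{+}$ and $\hat g \in W^{2,p}$, and all the objects appearing in \eqref{confcov} ($\Delta_g v$, $R_g$, $\Delta_{\hat g}(\psi^{-1}v)$, $R_{\hat g}$) belong to $L^{p}$, so \eqref{confcov} holds almost everywhere --- it follows from the smooth identity by approximation, or directly from the product and quotient rules in $W^{2,p}$ with $p>n$.

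Next I would apply \eqref{confcov} with $v = u$, so that $\hat u = \psi^{-1} u$, and compute term by term how the two sides of \eqref{Lichnerowicz2} transform. The linear part gives $\frac{4(n-1)}{n-2}\Delta_{\hat g}\hat u + R_{\hat g}\hat u = \mathcal{L}_{\hat g}\hat u = \psi^{-(N-1)}\mathcal{L}_g u$. For the nonlinear term, $\hat\tau = \tau$ and $\hat u^{\,N-1} = \psi^{-(N-1)} u^{N-1}$, hence $\frac{n-1}{n}\hat\tau^{2}\hat u^{\,N-1} = \psi^{-(N-1)}\cdot\frac{n-1}{n}\tau^{2} u^{N-1}$. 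For the right-hand side, $\hat w = \psi^{-N} w$ and $\hat u^{\,N+1} = \psi^{-(N+1)} u^{N+1}$, so
\begin{equation*}
\frac{\hat w^{2}}{\hat u^{\,N+1}} = \frac{\psi^{-2N} w^{2}}{\psi^{-(N+1)} u^{N+1}} = \psi^{-2N+(N+1)}\,\frac{w^{2}}{u^{N+1}} = \psi^{-(N-1)}\,\frac{w^{2}}{u^{N+1}} .
\end{equation*}
Therefore, evaluated at $\hat u$, the quantity ``LHS of \eqref{Lichnerowicz2} minus RHS of \eqref{Lichnerowicz2}'' equals $\psi^{-(N-1)}$ times ``LHS of \eqref{Lichnerowicz} minus RHS of \eqref{Lichnerowicz}'' evaluated at $u$, as functions in $L^{p}$.

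To finish, since $\psi^{-(N-1)} > 0$ everywhere on $M$, multiplying an (in)equality by $\psi^{-(N-1)}$ preserves it: thus $u$ satisfies the supersolution inequality for \eqref{Lichnerowicz} if and only if $\hat u$ satisfies the supersolution inequality for \eqref{Lichnerowicz2}, and likewise with the inequality reversed (subsolutions) and with equality (solutions). This settles all three assertions simultaneously.

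The only genuine obstacle is the bookkeeping in the first step --- verifying that \eqref{confcov} together with the scaling of the nonlinear terms holds in the weak/a.e.\ sense under the hypotheses $g \in W^{2,p}$, $\psi \in W^{2,p}_{+}$, $w,\tau \in L^{2p}$ with $p>n$; once this is in place the rest is a direct computation. (Note that $w,\tau \in L^{2p}$ guarantees that $w^{2}/u^{N+1}$ and $\tau^{2} u^{N-1}$ lie in $L^{p}$, so the notion of $W^{2,p}$ sub/supersolution is well posed.)
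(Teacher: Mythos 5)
Your proof is correct, and it is essentially the standard argument: the paper itself gives no proof of this lemma (it simply cites \cite[Lemma 1]{MaxwellNonCMC}), and the cited proof is exactly this computation — the conformal covariance of $\frac{4(n-1)}{n-2}\Delta_g + R_g$ combined with the observation that every term in the equation picks up the same factor $\psi^{-(N-1)}>0$, so the (in)equalities are preserved. Your regularity remarks ($W^{2,p}\hookrightarrow C^1$ for $p>n$, $\psi$ bounded away from zero on the compact $M$) correctly justify that the identity holds a.e.\ in the stated function classes.
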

   From the techniques in \cite{GicquaudNgo}, we get the following remark.
   \begin{Remark}\label{changeR}
   	Theorem \ref{maxwell1} guarantees that given any $w\in C^{0}\setminus \{0\}$, there exists a unique corresponding solution $u\in W^{2,p}_{+}$ to \eqref{Lichnerowicz}. 
   	Let $(\hat{g},\hat{w},\hat{\tau},\hat{u})$ be given as in Lemma \ref{maxwell2}. For any $k\geq N+1$, multiplying \eqref{Lichnerowicz2} by $\hat{u}^{k}$ and integrating over $M$, we obtain
   	$$\frac{4(n-1)}{n-2}\int_{M}{\hat{u}^{k}\Delta_{\hat{g}}\hat{u},dv_{\hat{g}}}+\int_{M}{R_{\hat{g}}\hat{u}^{k+1}dv_{\hat{g}}}+\frac{n-1}{n}\int_{M}{\hat{\tau}^{2}\hat{u}^{k+N-1}dv_{\hat{g}}}=\int_{M}{\hat{w}^{2}\hat{u}^{k-N-1}dv_{\hat{g}}}.$$
   	Integration by parts tells us that the first integral is nonnegative, then we get that 
   	\begin{equation*}
   	\begin{aligned}
   	\left(\min{R_{\hat{g}}}\right)\int_{M}{\hat{u}^{k+1}dv_{\hat{g}}}&\leq\int_{M}{\hat{w}^{2}\hat{u}^{k-N-1}dv_{\hat{g}}}\\
   	& \leq\left(\int_{M}{\hat{u}^{k+1}dv_{\hat{g}}}\right)^{\frac{k-N-1}{k+1}}\left(\int_{M}{|\hat{w}|^{\frac{2(k+1)}{N+2}}dv_{\hat{g}}}\right)^{\frac{N+2}{k+1}}\quad\mbox{(by H\"{o}lder inequality)}. 
   	\end{aligned}
   	\end{equation*}
   	It follows that
   	$$\left(\min{R_{\hat{g}}}\right)\left(\int_{M}{\hat{u}^{k+1}dv_{\hat{g}}}\right)^{\frac{N+2}{k+1}}\leq \left(\int_{M}{|\hat{w}|^{\frac{2(k+1)}{N+2}}dv_{\hat{g}}}\right)^{\frac{N+2}{k+1}}.$$
   	Taking $k\to\infty$, we obtain that
   	\begin{equation*}
   	\left(\min{R_{\hat{g}}}\right)\left(\max{\hat{u}}\right)^{N+2}\leq \max{|\hat{w}|}^{2}.
   \end{equation*}
   	Since $\hat{u}=\phi^{-1}u$ and $\hat{w}=\phi^{-N}w$, we get from this inequality that
   	\begin{equation}\label{key2}
   	\left(\min{R_{\hat{g}}}\right)\left(\min{\phi}\right)^{2N}\left(\max{\phi}\right)^{-(N+2)}\left(\max{u}\right)^{N+2}\leq \max{|w|}^{2}.
   	\end{equation}
\end{Remark}
 The following lemma will be used all along the paper.
  \begin{Lemma}(see \cite[Lemma 2.6]{Nguyen})\label{maxpriw}
  Assume that $v,~u$ are respectively a supersolution (resp. subsolution) and a positive solution  to \eqref{Lichnerowicz} associated with a fixed $w$, then 
  $$v\geq u~(\mbox{resp. $\leq$}).$$
  In particular, assume $u_{0}$ (resp. $u_{1}$) is a positive solution to \eqref{Lichnerowicz} associated to $w=w_{0}$ (resp. $w_{1}$). Assume moreover $|w_{0}|\leq |w_{1}|$, then $u_{0}\leq u_{1}$.
  \end{Lemma}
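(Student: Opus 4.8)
The plan is to prove the first assertion --- a supersolution dominates the (unique) positive solution, and symmetrically a subsolution is dominated by it --- and then obtain the ``in particular'' clause for free: if $u_{1}$ solves \eqref{Lichnerowicz} with $w=w_{1}$ and $0\leq w_{0}\leq w_{1}$, then $w_{0}^{2}\leq w_{1}^{2}$ gives
\[
\frac{4(n-1)}{n-2}\Delta u_{1}+Ru_{1}+\frac{n-1}{n}\tau^{2}u_{1}^{N-1}=\frac{w_{1}^{2}}{u_{1}^{N+1}}\geq\frac{w_{0}^{2}}{u_{1}^{N+1}},
\]
so $u_{1}$ is a supersolution to \eqref{Lichnerowicz} for $w_{0}$; comparing it with the solution $u_{0}$ for $w_{0}$ yields $u_{0}\leq u_{1}$.

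For the first assertion I would run a scaling (touching-point) argument rather than a direct maximum principle on $v-u$, precisely because when $\mathcal{Y}_{g}\leq 0$ the scalar curvature $R$ need not have a favourable sign. Let $v$ be a supersolution and $u$ the positive solution, both for the same $w$; both lie in $W^{2,p}_{+}\hookrightarrow C^{1}$ and hence are bounded between two positive constants on the closed manifold $M$. Put $\lambda_{0}:=\min_{M}(v/u)\in(0,\infty)$; it is attained at some $x_{0}$, so $\lambda_{0}u\leq v$ on $M$ with equality at $x_{0}$. I claim $\lambda_{0}\geq 1$, which gives $v\geq\lambda_{0}u\geq u$. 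The key algebraic fact is that, writing $\mathcal{L}(\cdot)$ for the left minus right side of \eqref{Lichnerowicz}, one has for $0<\lambda\leq 1$
\[
\mathcal{L}(\lambda u)-\lambda\mathcal{L}(u)=\frac{n-1}{n}\tau^{2}u^{N-1}\bigl(\lambda^{N-1}-\lambda\bigr)-\frac{w^{2}}{u^{N+1}}\bigl(\lambda^{-N-1}-\lambda\bigr)\leq 0,
\]
since $N-1>1$ makes $\lambda^{N-1}-\lambda\leq 0$ while $\lambda^{-N-1}-\lambda\geq 0$. As $\mathcal{L}(u)=0$, this shows $\lambda_{0}u$ is a subsolution.

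Now suppose, for contradiction, that $\lambda_{0}<1$. Set $\psi:=v-\lambda_{0}u\geq 0$, so $\psi(x_{0})=0$. Subtracting $\mathcal{L}(\lambda_{0}u)\leq 0\leq\mathcal{L}(v)$ and writing the increments $v^{N-1}-(\lambda_{0}u)^{N-1}=a\psi$, $v^{-N-1}-(\lambda_{0}u)^{-N-1}=b\psi$ with $a\geq 0$, $b\leq 0$ \emph{bounded} (difference quotients of $s\mapsto s^{N-1}$ and $s\mapsto s^{-N-1}$ over a compact set of positive reals), one gets
\[
\frac{4(n-1)}{n-2}\Delta\psi+\Bigl(R+\tfrac{n-1}{n}\tau^{2}a-w^{2}b\Bigr)\psi\geq 0
\]
with zeroth-order coefficient in $L^{\infty}$. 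By the strong maximum principle --- valid here for an arbitrary bounded coefficient because $\psi\geq 0$, so the ``wrong-sign'' part of the coefficient only makes the inequality more favourable when one keeps its positive part --- $\psi$ attains its minimum value $0$ at an interior point, hence $\psi\equiv 0$, i.e. $v=\lambda_{0}u$. Then $\mathcal{L}(\lambda_{0}u)=\mathcal{L}(v)\geq 0$, so in fact $\mathcal{L}(\lambda_{0}u)=0=\lambda_{0}\mathcal{L}(u)$; since the two summands in the displayed identity for $\mathcal{L}(\lambda_{0}u)-\lambda_{0}\mathcal{L}(u)$ are each of one (non-positive) sign, both vanish, and $\lambda_{0}\neq 1$ then forces $\tau\equiv 0$ and $w\equiv 0$, contradicting $\tau>0$. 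Hence $\lambda_{0}\geq 1$. The subsolution case is entirely analogous, using $\Lambda_{0}:=\max_{M}(v/u)$ and the reversed inequalities, which are now valid for $\mu\geq 1$.

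The main obstacle, as signalled above, is the sign of $R$: a naive maximum principle on $v-u$ needs the zeroth-order term and the nonlinearity to cooperate, which fails when $\mathcal{Y}_{g}\leq 0$. The scaling argument removes this: at the touching point $x_{0}$ the term $R\psi$ drops out because $\psi(x_{0})=0$, and elsewhere only \emph{boundedness}, not the sign, of the coefficient is needed, which is guaranteed by the $C^{1}$ bounds on $v$ and $u$. (Alternatively, in the cases $\mathcal{Y}_{g}\geq 0$ one may first invoke Remark \ref{assumptionR} to assume $R\geq 0$ and argue directly on $\{v<u\}$ with the weak maximum principle, using that $s\mapsto\frac{n-1}{n}\tau^{2}s^{N-1}-w^{2}s^{-N-1}$ is nondecreasing.)
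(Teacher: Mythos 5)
Your proof is correct, but it takes a genuinely different route from the paper's. The paper's argument is a two-line corollary of the machinery already set up in Section 2: since $u$ is a solution it is a subsolution, hence so is $tu$ for $t\in(0,1]$ (the same scaling computation you perform), one picks $t$ small enough that $tu\leq v$, invokes Proposition \ref{method sub-super} to produce a solution $u'$ with $tu\leq u'\leq v$, and concludes $u'=u\leq v$ by the uniqueness statement of Theorem \ref{maxwell1}. Your touching-point argument replaces both of these black boxes with a direct comparison: you use the same key observation that $\lambda u$ is a subsolution for $\lambda\leq 1$, but then run the strong maximum principle on $\psi=v-\lambda_0 u$ at the minimum of $v/u$, disposing of the sign of $R$ by splitting the zeroth-order coefficient into positive and negative parts. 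What your approach buys is self-containedness (it essentially re-proves the uniqueness it needs, rather than citing it) at the cost of two minor points worth flagging: the zeroth-order coefficient involves $w^{2}$, which under the hypotheses of the lemma is only in $L^{p}$ rather than $L^{\infty}$, so one should invoke a version of the strong maximum principle for strong $W^{2,p}$ supersolutions with $L^{p}$ coefficients (this is standard but not the textbook Hopf statement); and your final contradiction uses the standing assumption $\tau>0$ from \eqref{condinitial2}, exactly as the paper's appeal to uniqueness does, so neither proof covers the degenerate case $\tau\equiv0$, $w\equiv0$ where the conclusion genuinely fails. Your reading of the ``in particular'' clause as requiring $0\leq w_{0}\leq w_{1}$ is the intended one, since $w$ always arises as $|\sigma+LW|$.
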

  \begin{proof} 
  We will prove the supersolution case, the remaining cases are similar. Assume that $v,u$ are a supersolution and a positive solution respectively of \eqref{Lichnerowicz} associated to a fixed $w$. Since $u$ is a solution, $u$ is also a subsolution, and hence, as easily checked, so is $tu$ for all constant $t\in (0,1]$. Since $\min v>0$, we now take $t$ small enough s.t. $tu\leq v$. By Proposition \ref{method sub-super}, we then conclude that there exists a solution $u'\in W^{2,p}$ of \eqref{Lichnerowicz} satisfying $tu\leq u'\leq v$. On the other hand, by uniqueness of positive solution of \eqref{Lichnerowicz} given by Theorem \ref{maxwell1}, we obtain that $u=u'$, and hence get the desired conclusion.
  \end{proof}
  \begin{Remark}\label{link_f}
  In the next section, we will study a modified version of \eqref{Lichnerowicz}:
  \begin{equation}\label{current_equation}
  \frac{4(n-1)}{n-2}\Delta u+\left(tR+(1-t)f\right)u+\frac{n-1}{n}\tau^{2}u^{N-1}=\frac{w^{2}}{u^{N+1}},
  \end{equation}
  where $t\in [0,1]$ is a parameter and $f>0$ is a given continuous function. We assume further that $\min{\tau^{2}}>0$. In this situation, Theorem \ref{maxwell1} and Lemma \ref{maxpriw} are still valid for the equation \eqref{current_equation}. For instance, 
%  given $\phi\in W^{2,p}_{+}$, similarly to the proof of Lemma \ref{maxwell2}, $u$ is a supersolution (resp. subsolution) to \eqref{current_equation} if and only if $\hat{u}=\phi^{-1}u$ is a supersolution (resp. subsolution) to the following equation
%  \begin{equation*}
%  \frac{4(n-1)}{n-2}\Delta_{\hat{g}} \hat{u}+\left(t{R}_{\hat{g}}+(1-t)\hat{f}\right)\hat{u}+\frac{n-1}{n}\hat{\tau}^{2}\hat{\varphi}^{N-1}=\frac{\hat{w}^{2}}{\hat{\varphi}^{N+1}},
%  \end{equation*}
%  where $\hat{f}=\phi^{-N+2}f$ and $(\hat{g},\hat{w},\hat{\tau})$ is given as in Lemma \ref{maxwell2}. Thus, the conformal covariance still holds in our situation.\\
%\\
%  As a second example, 
 we will see that existence and uniqueness of solutions given in Theorem \ref{maxwell1} is still true here. In fact, suppose that $w\in L^{2p}\setminus \{0\}$. Let $\psi_{f}>0$ be the unique positive solution to 
    \begin{equation}\label{subsolution_given}
    \frac{4(n-1)}{n-2}\Delta u+R_{f}u+\frac{n-1}{n}\tau^{2}u^{N-1}=\frac{w^{2}}{u^{N+1}}
    \end{equation}
  with $R_{f}=\sup_{t}\left(\max\{tR+(1-t)f\}\right)>0$ (here existence and uniqueness of $\psi_{f}$ is proven similarly to Case 1 of Theorem \ref{maxwell1}). It is easy to see that $\psi_{f}$ is a subsolution to \eqref{current_equation}. On the other hand, since $\min{\tau^{2}}>0$, provided that $k>0$ is large enough, $k$ is a supersolution to \eqref{current_equation}, and then the (modified) Lichnerowicz equation \eqref{current_equation} admits a solution by the method of sub-and super-solution (note that $k$ is also a supersolution to \eqref{subsolution_given}, then $k\geq \psi_{f}$ by Lemma \ref{maxpriw}). For any $\phi\in W_{+}^{2,p}$, we now observe that similarly to the proof of Lemma \ref{maxwell2}, $u$ is a solution to \eqref{current_equation} if and only if $\hat{u}=\phi^{-1}u$ is a solution to the following equation
    \begin{equation*}
    \frac{4(n-1)}{n-2}\Delta_{\hat{g}} \hat{u}+\left[R_{\hat{g}}+(1-t)\left(\hat{f}-\hat{R}\right)\right]\hat{u}+\frac{n-1}{n}\hat{\tau}^{2}\hat{u}^{N-1}=\frac{\hat{w}^{2}}{\hat{u}^{N+1}},
    \end{equation*}
    where $\hat{f}=\phi^{-N+2}f$, $\hat{R}=\phi^{-N+2}R$ and $(\hat{g},\hat{w},\hat{\tau})$ is given as in Lemma \ref{maxwell2}. By using this fact, uniqueness of solution to \eqref{current_equation} follows in much the same way as in \cite[Proposition 4.4]{MaxwellRoughCompact}. Similarly, it is not difficult to show that Lemma \ref{maxpriw} remains valid for the (modified) Lichnerowicz equation by the same argument.
  \end{Remark}
  \section{Proof of Theorem \ref{theoremofDHG}}\label{mainsection}
  In this section, we introduce the Leray-Schauder fixed point theorem used in \cite{Nguyen} and obtain another version of the main theorem in \cite{DahlGicquaudHumbert} and \cite{Nguyen}. We first recall the Leray-Schauder fixed point theorem (see e.g. \cite[Theorem 11.6]{GilbargTrudinger}).

  \begin{Theorem} (\textbf{Leray-Schauder fixed point})\label{schaefer}
  Let $X$ be a Banach space and assume that
  $$T:~X\times [0,1]\rightarrow X$$
  is a continuous compact operator, satisfying $T(x,0)=0$ for all $x\in X$. If the set 
  $$K=\left\{x\in X|~~\exists t\in[0,1]~\mbox{such that}~x=T(x,t)\right\}$$
 is bounded, then $T(.,1)$ has a fixed point.
  \end{Theorem}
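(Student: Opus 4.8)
The plan is to deduce this directly from the basic properties of the Leray--Schauder degree; the statement is essentially the ``a priori bound'' repackaging of homotopy invariance of the degree. First I would use the boundedness of $K$ to fix a radius $R>0$ with $K\subset B_R:=\{x\in X:\ \|x\|<R\}$. The point of this choice is that for \emph{every} $t\in[0,1]$ the map $I-T(\cdot,t)$ has no zero on the sphere $\partial B_R$: if $x=T(x,t)$ for some $t\in[0,1]$, then $x\in K\subset B_R$, so $\|x\|<R$. This is the only step that uses the hypothesis on $K$, and it is exactly what is needed for the degree to be defined.

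Next, since $T$ is continuous and maps bounded sets into precompact sets, each $T(\cdot,t)\colon\overline{B_R}\to X$ is a compact map, so $I-T(\cdot,t)$ is a compact perturbation of the identity and the Leray--Schauder degree $d(t):=\deg\big(I-T(\cdot,t),B_R,0\big)$ is well defined for every $t\in[0,1]$ by the previous paragraph. Joint continuity and compactness of $T$ make $(x,t)\mapsto T(x,t)$ an admissible compact homotopy on $\overline{B_R}\times[0,1]$ (no fixed points occur on $\partial B_R$ at any parameter, again by the previous paragraph), so homotopy invariance of the degree gives $d(0)=d(1)$.

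Finally I would evaluate $d$ at the endpoint $t=0$: by hypothesis $T(\cdot,0)\equiv 0$, hence $I-T(\cdot,0)=I$, and since $0\in B_R$ the normalization property yields $d(0)=\deg(I,B_R,0)=1$. Therefore $d(1)=1\neq 0$, and the solution property of the degree produces $x\in B_R$ with $x=T(x,1)$, i.e. a fixed point of $T(\cdot,1)$, as desired.

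I expect no essential obstacle: the whole argument rests on the classical Leray--Schauder degree (existence for compact perturbations of the identity, together with its normalization, solution and homotopy-invariance properties), and the only thing one must verify by hand is that the boundedness of $K$ pushes all fixed points strictly inside $B_R$. If one wishes to avoid degree theory altogether, then in the special case $T(x,t)=t\,S(x)$ one recovers Schaefer's classical argument --- compose $S$ with the radial retraction onto $\overline{B_R}$, apply Schauder's fixed point theorem, and rule out the alternative $x_0=\lambda S(x_0)$ with $\|x_0\|=R$, $\lambda\in(0,1)$, using the a priori bound --- but for the general homotopy allowed in the statement the degree-theoretic route above is the natural one.
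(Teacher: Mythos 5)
Your argument is correct. Note, however, that the paper does not prove this statement at all: it is recalled as a classical result with a pointer to \cite[Theorem~11.6]{GilbargTrudinger}, so there is no internal proof to compare against. Your degree-theoretic route is sound and complete: the boundedness of $K$ places all solutions of $x=T(x,t)$ strictly inside $B_R$, which is exactly the admissibility condition needed both to define $\deg\bigl(I-T(\cdot,t),B_R,0\bigr)$ and to invoke homotopy invariance; the normalization $\deg(I,B_R,0)=1$ at $t=0$ (using $T(\cdot,0)\equiv 0$) and the solution property then finish the job. The only hypothesis you should make sure you are actually using is that $T$ is compact on $\overline{B_R}\times[0,1]$ jointly in $(x,t)$, so that $x\mapsto (x,t)\mapsto T(x,t)$ is an admissible compact homotopy --- this is precisely what ``continuous compact operator on $X\times[0,1]$'' gives you, so no gap arises. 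For comparison, the proof in the cited reference avoids degree theory entirely: it rescales so that $M=1$ and builds from $T$ a single compact self-map of the closed unit ball (via a cut-off in the parameter combined with a radial retraction), applies Schauder's fixed point theorem, and uses the a priori bound to exclude fixed points of the auxiliary map that do not correspond to fixed points of $T(\cdot,1)$. That argument is more elementary (only Schauder is needed), while yours is shorter once the Leray--Schauder degree is available and generalizes immediately to any admissible homotopy; your closing remark correctly identifies that the retraction trick in its simplest form only handles $T(x,t)=tS(x)$, though the cut-off construction in the reference does extend it to the general statement.
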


Before going further, we make the following remark:

\begin{Remark}\label{change_variation}
$(\varphi,W)$ is a solution to the conformal constraint equations w.r.t. the initial data $(g,\tau,\sigma)$ if and only if $\left(C^{-1}\varphi,C^{-\frac{N+2}{2}}W\right)$ is a solution to the conformal constraint equation w.r.t. the initial data $\left(g,C^{\frac{N-2}{2}}\tau,C^{-\frac{N+2}{2}}\sigma\right)$ for any constant $C>0$.
\end{Remark}
\begin{proof}[Proof of Theorem \ref{theoremofDHG}] We divide the proof into three steps\\
 
\textit{\textbf{Step 1.} Construction of a continuous compact operator:}
For any continuous function $f>0$ or $f\equiv R$ if $\mathcal{Y}_{g}>0$, we define the map $T_{f}:~L^{\infty}\times [0,1]\rightarrow L^{\infty}$ as follows. For each $(\varphi,t)\in L^{\infty}\times [0,1]$, there exists a unique $W_{\varphi} \in W^{2,p}$ such that
\begin{equation}\label{vectorequation}
-\frac{1}{2}L^{*}LW_{\varphi}=\frac{n-1}{n}\varphi^{N}d\tau,
\end{equation}
and, by Remark \ref{link_f}, there is a unique $\psi_{\varphi, t} \in W^{2,p}_{+}$ satisfying
$$\frac{4(n-1)}{n-2}\Delta \psi_{\varphi, t}+\left[tR+(1-t)f\right]\psi_{\varphi, t} = -\frac{n-1}{n}t^{2N} \tau^{2} \psi_{\varphi, t}^{N-1} + |\sigma+LW_{\varphi}|^{2}\psi_{\varphi, t}^{-N-1}.$$
We define
$$T_{f}(\varphi,t):= t \psi_{\varphi, t}.$$ 

Following \cite{MaxwellNonCMC} and \cite{DahlGicquaudHumbert}, the mapping $G:~L^{\infty}\rightarrow C^{1}$ defined by $G(\varphi)=W_{\varphi}$, with $W_{\varphi}$ uniquely determined by \eqref{vectorequation}
is continuous and compact. Thus, to show that $T_{f}$ is compact and continuous, it suffices to prove the continuity of
$\hat{T}_{f}:~C^{1}\times [0,1]\rightarrow W^{2,p}_{+}$ defined by $\hat{T}_{f}(W,t)=\psi$, where

\begin{equation}\label{modified.Lichnerowicz}
\frac{4(n-1)}{n-2}\Delta\psi + \left[tR+(1-t)f\right]\psi=-\frac{n-1}{n}t^{2N}\tau^{2}\psi^{(N-1)}+|\sigma+LW|^{2}\psi^{-N-1}.
\end{equation}

We combine the techniques from \cite[Lemma 2.3]{DahlGicquaudHumbert} and \cite[Proposition 3.6]{Nguyen} to prove that
$\hat{T}_{f}$ is continuous. Set $u=\ln{\hat{T}_{f}(W,t)}$. We have from the definition of $\hat{T}_{f}$ that
$$\frac{4(n-1)}{n-2}\left(\Delta u-|d u|^{2}\right)+\left[tR+(1-t)f\right]=-\frac{n-1}{n}t^{2N}\tau^{2}e^{(N-2)u}+|\sigma+LW|^{2}e^{-(N+2)u}.$$

Next, we prove that $\ln\circ\hat{T}_{f}$ is a $C^{1}-$map through the implicit function theorem.
In fact, define $F:C^{1}\times [0,1]\times W^{2,p}\rightarrow L^{p}$ by
$$F(W,t,u)=\frac{4(n-1)}{n-2}\left(\Delta u-|d u|^{2}\right)+\left[tR+(1-t)f\right]+\frac{n-1}{n}t^{2N}\tau^{2}e^{(N-2)u}-|\sigma+LW|^{2}e^{-(N+2)u}.$$
It is clear that $F$ is $C^1$ and, under our assumptions $u = \ln\left(\hat{T}_{f}(W,t)\right)$ is the unique solution to $F\left(W,t,u\right)=0$.
A standard computation shows that the Fr\'{e}chet derivative of $F$ w.r.t. $u$ is given by
  $$F_{u}(W,t)(v)=\frac{4(n-1)}{n-2}\left(\Delta v-\langle du,dv \rangle\right)+\frac{(n-1)(N-2)}{n}t^{2N}\tau^{2}e^{(N-2)u}v+(N+2)|\sigma+LW|^{2}e^{-(N+2)u}v.$$
  We first note that $F_{u}\in C\left(C^{1}\times [0,1], L(W^{2,p},L^{2p})\right)$, where $L(W^{2,p}, L^{2p})$ denotes the Banach space of all linear continuous maps from $W^{2,p}$ into $L^{2p}$. In particular, setting $u_{0}=\ln\left(\hat{T}_{f}(W,t)\right)$ we have
  $$F_{u_{0}}(W,t)(v)=\frac{4(n-1)}{n-2}\left(\Delta v-\langle du_{0},dv \rangle\right)+\left(\frac{(n-1)(N-2)}{n}t^{2N}\tau^{2}e^{(N-2)u_{0}}+(N+2)|\sigma+LW|^{2}e^{-(N+2)u_{0}}\right)v.$$
  Since
  $$\int_{M}{{|\sigma+LW|}^{2}e^{-(N+2)u_{0}}dv}\geq e^{-(N+2)\max |u_{0}|}\int_{M}{{|\sigma+LW|}^{2}dv}=e^{-(N+2)\max |u_{0}|}\left(\int_{M}{{|\sigma|}^{2}dv}+\int_{M}{{|LW|}^{2}dv}\right)>0,$$
  the non-negative term $\left(\frac{(n-1)(N-2)}{n}t^{2N}\tau^{2}e^{(N-2)u_{0}}+(N+2)|\sigma+LW|^{2}e^{-(N+2)u_{0}}\right)$ is not identically $0$.
 Then we can conclude by the maximum principle that $F_{u_{0}}(W,t):~W^{2,p}\rightarrow L^{2p}$ is an isomorphism (see \cite[Theorem 8.14]{GilbargTrudinger}). The implicit function theorem
 then implies that $\ln\circ\hat{T}_{f}$ is a $C^{1}-$function in a neighborhood of $(W,t)$, which proves our claim.\\

 \textit{\textbf{Step 2.} Application of the Leray-Schauder fixed point theorem:} We now set
  $$K=\left\{\varphi\in L^{\infty}|~~\exists
    t\in[0,1]~\mbox{such that}~\varphi=T_{f}(\varphi,t)\right\}.$$
  By the Leray-Schauder fixed point theorem, if $K$ is bounded, then the system \eqref{CE} associated to $(g,\tau,\sigma)$ admits a solution, which is our first assertion.\\

  Assume from now on that $K$ is unbounded. So there exists a sequence $(\varphi_{i}, W_i, t_{i})$ satisfying
  \begin{subequations}\label{modified_CE_with_t}
   \begin{eqnarray}
   \frac{4(n-1)}{n-2}\Delta \varphi_{i}+\left[t_{i}R+(1-t_{i})f\right]\varphi_{i}&=&-\frac{n-1}{n}t_{i}^{2N}\tau^{2}\varphi_{i}^{N-1}+|\sigma+LW_{i}|^{2}\varphi_{i}^{-N-1}\\
   -\frac{1}{2}L^{*}LW_{i}&=&\frac{n-1}{n}t^{N}_{i}\varphi_{i}^{N}d\tau,
   \end{eqnarray}
   \end{subequations}
   with $||\varphi_{i}||_{L^{\infty}}\to +\infty$ (see \cite[Theorem 3.3 or Proposition 3.6]{Nguyen}). We need to discuss the following four possibilities.
   
 \begin{itemize}
 \item\textit{Case 1. (after passing to a subsequence) $t_{i}\to t_{0}>0$:} We argue similarly to \cite[Theorem 1.1]{DahlGicquaudHumbert} or \cite[Theorem 3.3]{Nguyen} to obtain existence of a nontrivial solution $V\in W^{2,p}$ to the limit equation
   $$-\frac{1}{2}L^{*}LV=\sqrt{\frac{n-1}{n}}|LV|\frac{d\tau}{\tau},$$
   which is our second assertion. In fact, we set $\gamma_{i}=||\varphi_{i}||_{\infty}$ and rescale $\varphi_{i},~W_{i}$ and $\sigma$ as follows:
   $$\widetilde{\varphi}_{i}=\gamma_{i}^{-1}\varphi_{i},~~\widetilde{W}_{i}=\gamma_{i}^{-N}W_{i},~~\widetilde{\sigma}_{i}=\gamma_{i}^{-N}\sigma.$$
   Note that by assumption $\gamma_{i}=||\varphi_{i}||_{\infty}\to \infty$ as $i\to\infty$. The system \eqref{modified_CE_with_t} may be rewritten as
   \begin{subequations}\label{rescale}
   \begin{eqnarray}
    \frac{1}{\gamma_{i}^{N-2}}\left[\frac{4(n-1)}{n-2}\Delta\widetilde{\varphi}_{i}+\left(t_{i}R+(1-t_{i})f\right)\widetilde{\varphi}_{i}\right]&=&-\frac{n-1}{n}t_{i}^{2N}\tau^{2}\widetilde{\varphi}_{i}^{N-1}+|\widetilde{\sigma}+L\widetilde{W}_{i}|^{2}\widetilde{\varphi}_{i}^{-N-1}\\
   -\frac{1}{2}L^{*}L\widetilde{W}_{i}&=&\frac{n-1}{n}t^{N}_{i}\widetilde{\varphi}^{N}_{i}d\tau.
   \end{eqnarray}
   \end{subequations}
   Since $||\widetilde{\varphi}_{i}||_{\infty}=1$, we conclude from the vector equation that $\left(\widetilde{W}_{i}\right)_{i}$ is bounded in $W^{2,p}$ and then by the Rellich theorem, (after passing to a subsequence) $\widetilde{W}_{i}$ converges in the $C^{1}$-norm to some $\widetilde{W}_{\infty}$. We now prove that 
   \begin{equation}\label{converging}
   \widetilde{\varphi}_{i}\to\widetilde{\varphi}_{\infty}\coloneqq\left(\sqrt{\frac{n}{n-1}}\frac{|L\widetilde{W}_{\infty}|}{t_{0}^{N}\tau}\right)^{\frac{1}{N}} ~~~\mbox{in $L^{\infty}$.}
   \end{equation}
    Note that if such a statement is proven, passing to the limit in the vector equation, we see that $\widetilde{W}_{\infty}$ is a solution to the limit equation \eqref{limit_non_parameter}. On the other hand, since $||\widetilde{\varphi}_{i}||_{\infty}=1$ for all $i$, we have $||\widetilde{\varphi}_{\infty}||_{\infty}=1$ and, in particular, $\widetilde{W}_{\infty}\nequiv 0$ from \eqref{converging}. Therefore, the non-triviality of $W_{\infty}$ is obtained, and the second assertion follows.\\
    \\
    Given $\epsilon>0$, since $\frac{|L\widetilde{W}_{\infty}|}{\tau}\in C^{0}$, we can choose $\widetilde{\omega}\in C_{+}^{2}$ s.t.
    \begin{equation}\label{omegaandW}
    \biggl|\widetilde{\omega}-\left(\sqrt{\frac{n}{n-1}}\frac{|L\widetilde{W}_{\infty}|}{t_{0}^{N}\tau}\right)^{\frac{1}{N}}\biggr|< \frac{\epsilon}{2}.
    \end{equation}
    To show \eqref{converging}, it suffices to show that
    $$|\widetilde{\varphi}_{i}-\widetilde{\omega}|\leq \frac{\epsilon}{2}$$
    for all $i$ large enough. We argue by contradiction. Assume that it is not true. We first consider the case when (after passing to a subsequence) there exists a sequence $(m_{i})\in M$ s.t. 
    \begin{equation}\label{omegaandpsi}
    \widetilde{\varphi}_{i}(m_{i})>\widetilde{\omega}(m_{i})+\frac{\epsilon}{2}.
    \end{equation}
     By Lemma \ref{maxpriw} and Inequality \eqref{omegaandpsi}, $\widetilde{\omega}+\frac{\epsilon}{2}$ is not a supersolution to the rescaled Lichnerowicz equation. As a consequence, there exists a sequence $(p_{i})\in M$ satisfying
    \begin{align*} \Biggl\{\frac{1}{\gamma_{i}^{N-2}}\left(\frac{4(n-1)}{n-2}\Delta\left(\widetilde{\omega}+\frac{\epsilon}{2}\right)+\left(t_{i}R+(1-t_{i})f\right)\left(\widetilde{\omega}+\frac{\epsilon}{2}\right)\right)+\frac{n-1}{n}t_{i}^{2N}\tau^{2}&\left(\widetilde{\omega}+\frac{\epsilon}{2}\right)^{N-1}\Biggr\}(p_{i})\\
    &<
    \left\{|\widetilde{\sigma}_{i}+L\widetilde{W}_{i}|^{2}\left(\widetilde{\omega}+\frac{\epsilon}{2}\right)^{-N-1}\right\}(p_{i}).
    \end{align*}
    By compactness of $M$, we can assume that $(p_i)$ converges to some $p_\infty \in M$.
    Since $\left(\widetilde{\omega}+\frac{\epsilon}{2}\right)$ and $\tau$ are positive, the previous inequality can be rewritten as follows
    \begin{align*}
    \Biggl\{\frac{n\left(\widetilde{\omega}+\frac{\epsilon}{2}\right)^{N+1}}{(n-1)t_{i}^{2N}\tau^{2}\gamma_{i}^{N-2}}\left(\frac{4(n-1)}{n-2}\Delta\left(\widetilde{\omega}+\frac{\epsilon}{2}\right)+\left(t_{i}R+(1-t_{i})f\right)\left(\widetilde{\omega}+\frac{\epsilon}{2}\right)\right)+&\left(\widetilde{\omega}+\frac{\epsilon}{2}\right)^{2N}\Biggr\}(p_{i})\\
    &<
    \left\{\frac{n}{n-1}|\widetilde{\sigma}_{i}+L\widetilde{W}_{i}|^{2}t_{i}^{-2N}\tau^{-2}\right\}(p_{i}).
    \end{align*}
    Taking $i\to\infty$, due to the facts that $\widetilde{\omega}\in C^{2}_{+}$, $\min{|\tau|}>0$, $t_{i}\to t_{0}>0$, $\gamma_{i}\to\infty$ and $\widetilde{W}_{i}\to\widetilde{W}_{\infty}$ in $C^{1}-$norm, we obtain that
    
   $$\left\{\frac{n\left(\widetilde{\omega}+\frac{\epsilon}{2}\right)^{N+1}}{(n-1)t_{i}^{2N}\tau^{2}\gamma_{i}^{N-2}}\left(\frac{4(n-1)}{n-2}\Delta\left(\widetilde{\omega}+\frac{\epsilon}{2}\right)+\left(t_{i}R+(1-t_{i})f\right)\left(\widetilde{\omega}+\frac{\epsilon}{2}\right)\right)\right\}(p_{i})\to 0,$$
   
     $$\left(\widetilde{\omega}+\frac{\epsilon}{2}\right)^{2N}(p_{i})\to \left(\widetilde{\omega}+\frac{\epsilon}{2}\right)^{2N}(p_{\infty})$$
   and
   $$\frac{n}{n-1}\left(\frac{|\widetilde{\sigma}_{i}+L\widetilde{W}_{i}|}{t_{i}^{N}\tau}\right)^{2}(p_{i})\to \frac{n}{n-1}\left(\frac{|L\widetilde{W}_{\infty}|}{t_{0}^{N}\tau}\right)^{2}(p_{\infty}),$$ 
   
   This proves that 
   $$\widetilde{\omega}(p_{\infty})+\frac{\epsilon}{2}\leq \left(\sqrt{\frac{n}{n-1}}\frac{|L\widetilde{W}_{\infty}|}{t_{0}^{N}\tau}\right)^{\frac{1}{N}}(p_{\infty}),$$
   which contradicts \eqref{omegaandW}.\\
   
   The argument is similar if there exists a sequence $\left(m_{i}\right)\in M$ s.t. $\widetilde{\omega}(m_{i})-\frac{\epsilon}{2}>\widetilde{\varphi}_{i}(m_{i})$.\\
\\
    \item \textit{Case 2. (after passing to a subsequence) $t_{i}\to 0$:} Note that Equations \eqref{modified_CE_with_t} say that the (modified) conformal constraint equations associated to the seed data $(g,t_{i}^{N}\tau,\sigma)$ have a solution $(\varphi_{i},W_{i})$. To derive the last two assertions, we need to free $\tau$ of $t_{i}$ in the seed data. Then, rather than considering $(g,t_{i}^{N}\tau,\sigma)$, by Remark \ref{change_variation}, we can equivalently work on another one more suitable, allowing to remove $t_{i}$ from the mean curvature $\tau$, and hence by straightforward calculations as seen below the sequence $\{t^{n}_{i}\varphi_{i}\}_{i\in\mathbb{N}}$ will naturally appear and play an important role in characterizing our case. In this context, there are three situations arising depending on whether (after passing to subsequence) $t^{n}_{i}||\varphi_{i}||_{L^{\infty}}$ converges to $+\infty$, $0$ or a positive constant. We will address each of them.\\
    \\
    In the first situation, i.e. $ t^{n}_{i}||\varphi_{i}||_{L^{\infty}}\to +\infty$, by Remark \ref{change_variation}, the system \eqref{modified_CE_with_t} may be rewritten  as
   \begin{subequations}\label{modified_CE_infty}
      \begin{eqnarray}
      \label{modified_Lich_infty}
      \frac{4(n-1)}{n-2}\Delta \overline{\varphi}_{i}+\left[t_{i}R+(1-t_{i})f\right]\overline{\varphi}_{i}&=&-\frac{n-1}{n}\tau^{2}\overline{\varphi}_{i}^{N-1}+\left|t_{i}^{\frac{n(N+2)}{2}}\sigma+L\overline{W}_{i}\right|^{2}\overline{\varphi}_{i}^{-N-1}\\
      \label{modified_Vector_infty}
      -\frac{1}{2}L^{*}L\overline{W}_{i}&=&\frac{n-1}{n}\overline{\varphi}_{i}^{N}d\tau,
      \end{eqnarray}
      \end{subequations}
  where $(\overline{\varphi}_{i},\overline{W}_{i})=\left(t_{i}^{n}\varphi_{i},t_{i}^{\frac{n(N+2)}{2}}W_{i}\right)$ and $||\overline{\varphi}_{i}||_{L^{\infty}}=t^{n}_{i}||\varphi_{i}||_{L^{\infty}}\to \infty$. Again, taking $i\to\infty$ we argue similarly to Case 1 and obtain that there exists a nontrivial solution $\overline{W}_{\infty}\in W^{2,p}$ to the limit equation \eqref{limit_non_parameter} as stated in $(ii)$.\\
  \\
  The next situation, i.e. $t_{i}^{n}||\varphi_{i}||_{L^{\infty}}\to 0$, cannot happen. In fact, also by Remark \ref{change_variation} the system \eqref{modified_CE_with_t} may be rewritten as
   \begin{subequations}\label{modified_CE_0}
        \begin{eqnarray}
        \label{modified_Lich_0}
        \frac{4(n-1)}{n-2}\Delta \widehat{\varphi}_{i}+\left[t_{i}R+(1-t_{i})f\right]\widehat{\varphi}_{i}&=&-\frac{n-1}{n}t^{2N}_{i}\gamma_{i}^{N-2}\tau^{2}\widehat{\varphi}_{i}^{N-1}+\left|\gamma_{i}^{-\frac{N+2}{2}}\sigma+L\widehat{W}_{i}\right|^{2}\widehat{\varphi}_{i}^{-N-1}\\
        \label{modified_Vector_0}
        -\frac{1}{2}L^{*}L\widehat{W}_{i}&=&\frac{n-1}{n}t_{i}^{N}\gamma_{i}^{\frac{N-2}{2}}\widehat{\varphi}_{i}^{N}d\tau,
        \end{eqnarray}
  \end{subequations}
  where $\gamma_{i}=||\varphi_{i}||_{L^{\infty}}$ and $(\widehat{\varphi}_{i},\widehat{W}_{i})=(\gamma_{i}^{-1}\varphi_{i},\gamma_{i}^{-\frac{N+2}{2}}W_{i})$. If $f>0$, for any $k\geq N+1$, multiplying \eqref{modified_Lich_0} by $\widehat{\varphi}_{i}^{k}$ and integrating over $M$, we obtain
  \begin{equation}
  \begin{aligned}
  \frac{4(n-1)}{n-2}\int_{M}{\widehat{\varphi}_{i}^{k}\Delta\widehat{\varphi}_{i}dv}+\int_{M}{\left[t_{i}R+(1-t_{i})f\right]\widehat{\varphi}_{i}^{k+1}dv}&+\frac{n-1}{n}\int_{M}{t_{i}^{2N}\gamma_{i}^{N-2}\tau^{2}\widehat{\varphi}_{i}^{k+N-1}dv}\\
  &=\int_{M}{\left|\gamma_{i}^{-\frac{N+2}{2}}\sigma+L\widehat{W}_{i}\right|^{2}\widehat{\varphi}_{i}^{k-N-1}dv}.
  \end{aligned}
  \end{equation}
  Integration by parts tells us that the first integral is nonnegative, then we get that 
  \begin{equation*}
  	\begin{aligned}
  		\left(\min\left\{t_{i}R+(1-t_{i})f\right\}\right)\int_{M}{\widehat{\varphi}_{i}^{k+1}dv}&\leq\int_{M}{\left|\gamma_{i}^{-\frac{N+2}{2}}\sigma+L\widehat{W}_{i}\right|^{2}\widehat{\varphi}_{i}^{k-N-1}dv}\\
  		& \leq\left(\int_{M}{\widehat{\varphi}_{i}^{k+1}dv}\right)^{\frac{k-N-1}{k+1}}\left(\int_{M}{\left|\gamma_{i}^{-\frac{N+2}{2}}\sigma+L\widehat{W}_{i}\right|^{\frac{2(k+1)}{N+2}}dv}\right)^{\frac{N+2}{k+1}}\\
  		&\quad\mbox{(by H\"{o}lder inequality)}. 
  	\end{aligned}
  \end{equation*}
  It follows that
  $$	\left(\min\left\{t_{i}R+(1-t_{i})f\right\}\right)\left(\int_{M}{\widehat{\varphi}_{i}^{k+1}dv}\right)^{\frac{N+2}{k+1}}\leq \left(\int_{M}{\left|\gamma_{i}^{-\frac{N+2}{2}}\sigma+L\widehat{W}_{i}\right|^{\frac{2(k+1)}{N+2}}dv}\right)^{\frac{N+2}{k+1}}.$$
  Taking $k\to\infty$, we obtain that
  \begin{equation}\label{f>0}
  		\left(\min\left\{t_{i}R+(1-t_{i})f\right\}\right)\left(\max{\widehat{\varphi}_{i}}\right)^{N+2}\leq \max\left\{\left|\gamma_{i}^{-\frac{N+2}{2}}\sigma+L\widehat{W}_{i}\right|^{2}\right\}.
  \end{equation}
  However, since $||\widehat{\varphi}_{i}||_{L^{\infty}}=1$ and  $t_{i}^{n}\gamma_{i}\to 0$, we obtain from the vector equation \eqref{modified_Vector_0} that $||L\widehat{W}_{i}||_{L^{\infty}}\to 0$, and then by the fact that $t_{i}\to 0$ and $\gamma_{i}\to+\infty$, taking $i\to+\infty$ we conclude from \eqref{f>0} that $0<\min{f}\leq 0$, which is a contradiction.\\
  \\
  Now if $\mathcal{Y}_{g}>0$ and $f\equiv R$, we let $\hat{g}$ be a conformal metric $\phi^{N-2}g$ where a positive function $\phi\in W^{2,p}_{+}$ is chosen in such a way that $R_{\hat{g}}>0$. Note that $\max{\widehat{\varphi}_{i}}=1$ and that $$t_{i}R+(1-t_{i})f\equiv R \quad\mbox{if $f\equiv R$}.$$ 
  Arguing as to get \eqref{key2} in Remark \ref{changeR}, we then have from \eqref{modified_Lich_0} that 
  \begin{equation}\label{f=R}
  \left(\min{R_{\hat{g}}}\right)\left(\min{\phi}\right)^{2N}\left(\max{\phi}\right)^{-(N+2)}\leq \max\left\{\left|\gamma_{i}^{-\frac{N+2}{2}}\sigma+L\widehat{W}_{i}\right|^{2}\right\}.
  \end{equation}
Taking $i\to+\infty$, since $\gamma_{i}\to +\infty$ and $\left\|L\widehat{W}_{i}\right\|_{L^{\infty}}\to 0$, it follows from \eqref{f=R} that
$$0<\left(\min{R_{\hat{g}}}\right)\left(\min{\phi}\right)^{2N}\left(\max{\phi}\right)^{-(N+2)}\leq 0,$$
which is also a contradiction, and hence the situation where $t_{i}^{n}||\varphi_{i}||_{L^{\infty}}\to 0$ cannot happen as claimed.\\

  For the last one, i.e. $t^{n}_{i}||\varphi_{i}||_{L^{\infty}}\to c$ for some $c>0$, by Remark \ref{change_variation}, we again obtain the system \eqref{modified_CE_infty} where the condition $||\overline{\varphi}_{i}||_{L^{\infty}}\to+\infty$ is replaced by $||\overline{\varphi}_{i}||_{L^{\infty}}\to c$. It follows from \eqref{modified_Vector_infty} that (after passing to a subsequence) $\overline{W}_{i}$ converges to $\overline{W}_{0}$ in $C^{1}$. If $L\overline{W}_{0}\equiv 0$, arguing as to get \eqref{f>0} and \eqref{f=R} in the previous situation, we have from \eqref{modified_Lich_infty} that
 {\small   \begin{equation*}
   \left\{\begin{array}{ll}
   \textrm{if $f>0$:}&\left(\min\left\{t_{i}R+(1-t_{i})f\right\}\right)\left(\max{\overline{\varphi}_{i}}\right)^{N+2}\leq \max\left\{\left|t_{i}^{\frac{n(N+2)}{2}}\sigma+L\overline{W}_{i}\right|^{2}\right\}\to 0,\\
   \textrm{if $\mathcal{Y}_{g}>0$ and if $f\equiv R$:}&\left(\min{R_{\hat{g}}}\right)\left(\min{\phi}\right)^{2N}\left(\max{\phi}\right)^{-(N+2)}\left(\max{\overline{\varphi}_{i}}\right)^{N+2}\leq \max\left\{\left|t_{i}^{\frac{n(N+2)}{2}}\sigma+L\overline{W}_{i}\right|^{2}\right\}\to 0,
   \end{array} \right.
   \end{equation*}}
 where $\hat{g}$ is given as above, i.e., $\hat{g}=\phi^{N-2}g$ with $\phi\in W_{+}^{2,p}$ and $R_{\hat{g}}>0$. This is a contradiction since 
 {\small \begin{equation*}
 \left\{\begin{array}{ll}
 \textrm{if $f>0$:}&\left(\min\left\{t_{i}R+(1-t_{i})f\right\}\right)\left(\max{\overline{\varphi}_{i}}\right)^{N+2}\to\left(\min{f}\right)c^{N+2}>0,\\
 \textrm{if $\mathcal{Y}_{g}>0$ and if $f\equiv R$:}&\left(\min{R_{\hat{g}}}\right)\left(\min{\phi}\right)^{2N}\left(\max{\phi}\right)^{-(N+2)}\left(\max{\overline{\varphi}_{i}}\right)^{N+2}\to\left(\min{R_{\hat{g}}}\right)\left(\min{\phi}\right)^{2N}\left(\max{\phi}\right)^{-(N+2)}c^{N+2}>0.
 \end{array} \right.
 \end{equation*}}
Thus, we obtain $L\overline{W}_{0}\nequiv 0$. Now we can let $\overline{\varphi}_{0}$ be the unique positive solution to the equation
\begin{equation*}
\frac{4(n-1)}{n-2}\Delta \varphi+ f\varphi=-\frac{n-1}{n}\tau^{2}\varphi^{N-1}+|L\overline{W}_{0}|^{2}\varphi^{-N-1}.
\end{equation*}
(Here if $f>0$, existence and uniqueness of $\overline{\varphi}_{0}$ is proven similarly to Case 1 of Theorem \ref{maxwell1}). To show that $(\overline{\varphi}_{0},\overline{W}_{0})$ is a (nontrivial) solution to system \eqref{modified_CE}, which is the first statement of our last assertion, it suffices to show that $\overline{\varphi}_{i}\to \overline{\varphi}_{0}$ in $L^{\infty}$. In fact, since $L\overline{W}_{0}\nequiv0$, arguing similarly to the continuity of $\hat{T}_{f}$ in Step 1, we obtain that the map $\widetilde{T}_{f}~:~U_{\overline{W}_{0}}\times [0,1]\rightarrow W^{2,p}_{+}$ defined by $\widetilde{T}_{f}(w,t)=\varphi$ is continuous, where $U_{\overline{W}_{0}}$ is any given open neighborhood small enough of $|L\overline{W}_{0}|$ in $L^{\infty}$ and $\varphi$ is the unique positive solution to the equation
\begin{equation*}
\frac{4(n-1)}{n-2}\Delta\varphi + \left[tR+(1-t)f\right]\varphi=-\frac{n-1}{n}\tau^{2}\varphi^{N-1}+w^{2}\varphi^{-N-1}.
\end{equation*}
Combining this and the fact that  $\left(t_{i},\left|t_{i}^{\frac{n(N+2)}{2}}\sigma+L\overline{W}_{i}\right|\right)\to \left(0,|L\overline{W}_{0}|\right)$ we obtain $\overline{\varphi}_{i}\to \overline{\varphi}_{0}$ as claimed.
\end{itemize}

To complete our proof, the remaining work is to treat nonuniqueness results for the conformal constraint equations with positive Yamabe invariants.\\

\textit{\textbf{Step 3.} Nonuniqueness of solutions:} Assume that $\mathcal{Y}_{g}>0$. If neither $(i)$ nor $(ii)$ is true, taking $f\equiv R$, arguments above then tell us that there exists a sequence $\{t_{i}\}$ converging to $0$ s.t. the system \eqref{CE} associated to $(g,t^{N}_{i}\tau,\sigma)$ has a solution $(\varphi_{i},W_{i})$ satisfying $||\varphi_{i}||_{L^{\infty}}\to \infty$. On the other hand, we know that provided $\delta>0$ is small enough, the system \eqref{CE} associated to $(g,\delta\tau,\sigma)$ admits a solution $(\varphi_{\delta},W_{\delta})$ such that  $||\varphi_{\delta}||_{L^{\infty}}\leq c_{1}$ for some constant $c_{1}>0$ independent of $\delta$  (see \cite[Theorem 4.8 and Remark 4.9]{Nguyen} or \cite[Theorem 2.1]{GicquaudNgo}). This completes the proof of Theorem \ref{theoremofDHG}.
\end{proof}
\begin{Remark}
If $\mathcal{Y}_{g}<0$, we can omit the assumption $\sigma\nequiv 0$ in Theorem \ref{theoremofDHG}. In fact, let $\{\sigma_{i}\}$ be a sequence of non-zero transverse-traceless tensors converging to 0. Suppose that neither assertion (ii) nor (iii) holds. By Theorem \ref{theoremofDHG}, the system \eqref{CE} associated to $\sigma=\sigma_{i}$ has a solution $(\varphi_{i},W_{i})$. Moreover, these solutions must be uniformly bounded since we assumed that the assertion (ii) is not satisfied. Note that by Case 3 of Theorem \ref{maxwell1} and Lemma \ref{maxpriw} we have that $\varphi_{i}\geq \min\varphi_{0}>0$, where $\varphi_{0}$ is the unique positive solution to the Yamabe equation.
\begin{equation*}
\frac{4(n-1)}{n-2}\Delta \varphi +R\varphi=-\frac{n-1}{n}\tau^{2}\varphi^{N-1}.
\end{equation*}
Thus, taking $i\to\infty$, we obtain our claim.
\end{Remark}

 \section{Applications of Theorem \ref{theoremofDHG}}
  In this section, we show nonexistence and nonuniqueness results and answer a question raised in \cite{MaxwellNonCMC} (see the middle paragraph but one of page 630) as stated in the beginning of this article. For convenience, we will repeat their statements and give the corresponding proofs. We first construct a class of seed data such that the corresponding equations \eqref{CE} and \eqref{limit_non_parameter} have no (non-trivial) solution.
  \begin{Theorem}\label{Theorem.nonexistence_2}(\textbf{\textit{Nonexistence of solution}})
   Let data be given on $M$ as specified in \eqref{condintial} and assume that conditions \eqref{condinitial2} hold. Furthermore, assume that there exists $c>0$ s.t. $\left|L\left(\frac{d\tau}{\tau}\right)\right|\leq 2c\left|\frac{d\tau}{\tau}\right|^{2}$. Let $V$ be a given open neighborhood of the critical set of $\tau$. If $\sigma\nequiv 0$ and $\text{supp}\{\sigma\}\subsetneq M\setminus V$, then both of the conformal constraint equations \eqref{CE} and the limit equation \eqref{limit_non_parameter} associated to the seed data $(g,\tau^{a},\frac{\sigma}{\epsilon a})$ have no solution, provided $a^{-1},\epsilon a>0$ are small enough.
   \end{Theorem}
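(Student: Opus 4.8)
The plan is to establish separately the two non‑solvability statements: (a) the limit equation \eqref{limit_non_parameter} associated to $(g,\tau^{a},\sigma/(\epsilon a))$ has no nontrivial solution once $a$ is large, and (b) the conformal constraint equations \eqref{CE} associated to the same data have no solution once, in addition, $\epsilon a$ is small. Together these are exactly the claim. Throughout set $X:=d\tau/\tau$ and $\hat\tau:=\tau^{a}$, so that $d\hat\tau/\hat\tau=aX$; the hypothesis $|LX|\le 2c|X|^{2}$ then becomes $|L(d\hat\tau/\hat\tau)|=a|LX|\le (2c/a)|d\hat\tau/\hat\tau|^{2}$, i.e. passing from $\tau$ to $\tau^{a}$ makes the ``conformal defect'' of $d\hat\tau/\hat\tau$ of size $O(1/a)$. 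This gain drives the whole argument.

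\emph{Step (a).} Let $V\in W^{2,p}$ solve $-\tfrac{1}{2}L^{*}LV=\sqrt{\tfrac{n-1}{n}}\,|LV|\,(d\hat\tau/\hat\tau)$. Pair with the $1$-form $X$ and integrate; using $\int_{M}\langle L^{*}LV,X\rangle\,dv=\int_{M}\langle LV,LX\rangle\,dv$, the identity $d\hat\tau/\hat\tau=aX$, and $|\langle LV,LX\rangle|\le 2c\,|LV|\,|X|^{2}$, one gets
\[
\sqrt{\tfrac{n-1}{n}}\;a\int_{M}|LV|\,|X|^{2}\,dv\;\le\; c\int_{M}|LV|\,|X|^{2}\,dv .
\]
Hence, for $a>c\sqrt{n/(n-1)}$ ($c$ being fixed by the hypothesis) we must have $|LV|\,|X|^{2}\equiv 0$; since $|X|>0$ on $\{d\tau\neq 0\}$ this forces $|LV|\,|X|\equiv 0$ a.e. Pairing the same equation instead with $V$ and using $|\langle X,V\rangle|\le \|V\|_{L^{\infty}}|X|$ yields $\tfrac{1}{2}\int_{M}|LV|^{2}\,dv\le \sqrt{\tfrac{n-1}{n}}\,a\,\|V\|_{L^{\infty}}\int_{M}|LV|\,|X|\,dv=0$, so $LV\equiv 0$; by the second condition in \eqref{condinitial2} ($(M,g)$ carries no conformal Killing vector field) this gives $V\equiv 0$. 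Note that Step (a) uses neither $\sigma$ nor the support condition.

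\emph{Step (b).} Suppose $(\varphi,W)\in W^{2,p}_{+}\times W^{2,p}$ solves \eqref{CE} with data $(g,\hat\tau,\sigma/(\epsilon a))$. Pairing the vector equation with $X$, exactly as in Step (a), produces the energy identity
\[
\frac{(n-1)a}{n}\int_{M}\varphi^{N}\hat\tau\,|X|^{2}\,dv\;=\;-\tfrac{1}{2}\int_{M}\langle LW,LX\rangle\,dv\;\le\; c\int_{M}|LW|\,|X|^{2}\,dv ,
\]
the curvature hypothesis again absorbing the cross term. Since $X$ is bounded below on $\mathrm{supp}\,\sigma\subset M\setminus V$, it remains to show that a solution cannot make $\int_{M}|LW||X|^{2}\,dv$ large enough against $\int_{M}\varphi^{N}\hat\tau|X|^{2}\,dv$. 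For this one brings in the Lichnerowicz equation — integrated over $M$ and also evaluated at a maximum of $\varphi$ (normalizing the sign of $R$ by Remark \ref{assumptionR}) — together with the elliptic estimate $\|W\|_{W^{2,p}}\le C\|\varphi^{N}d\hat\tau\|_{L^{p}}$; these pin down $\|\varphi\|_{L^{\infty}}$ and $\|LW\|_{L^{\infty}}$ in terms of the parameters $a$ and $\epsilon a$, and feeding them back, with the $O(1/a)$ smallness of the defect of $d\hat\tau/\hat\tau$, makes the two sides of the identity incompatible when $a^{-1}$ and $\epsilon a$ are small. An equivalent route — the one closest to \cite{DahlGicquaudHumbert} and to Case 2 in the proof of Theorem \ref{theoremofDHG} — argues by contradiction along a sequence $\epsilon_{i}\to 0$: a hypothetical solution sequence must satisfy $\|\varphi_{i}\|_{L^{\infty}}\to\infty$, and rescaling as in \eqref{rescale} extracts, in the generic regime, a nonzero solution of the limit equation, contradicting Step (a); the two borderline rescaling regimes are ruled out exactly as the corresponding sub-cases of Step 2 in that proof.

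\emph{Main obstacle.} Step (b) is the crux. Testing the vector equation against $d\tau/\tau$ and absorbing $\langle LW,LX\rangle$ via the curvature hypothesis is routine; the delicate point is to estimate $\int_{M}|LW|\,|X|^{2}\,dv$ using only the Lichnerowicz equation, whose Laplacian term is sign‑indefinite and whose right‑hand side contains the large tensor $\sigma/(\epsilon a)$, and then to calibrate $a$ and $\epsilon$ against the constants depending on $M$, $g$, $\tau$, $\sigma$ (and, when $\mathcal{Y}_{g}<0$, on the conformal normalization of Remark \ref{assumptionR}). This is precisely the place where the argument extends \cite[Proposition 1.6]{DahlGicquaudHumbert}.
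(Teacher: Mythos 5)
Your Step (a) is correct and is essentially the paper's treatment of the limit equation (its ``Case 2''): pairing with $d\tau/\tau$, absorbing $\langle LV,L(d\tau/\tau)\rangle$ via the hypothesis to force $|LV|\,|d\tau/\tau|\equiv 0$, and then concluding $V\equiv 0$ from the absence of conformal Killing fields. The problem is Step (b), which you yourself flag as the crux and then do not carry out. Your first route stops at ``these pin down $\|\varphi\|_{L^{\infty}}$ and $\|LW\|_{L^{\infty}}$ \dots and feeding them back \dots makes the two sides of the identity incompatible'' --- no estimate is actually produced, and it is far from clear that one can be, since the Lichnerowicz equation only gives one-sided pointwise information at extrema and the term $\sigma/(\epsilon a)$ is large. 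Your second route misstates the dichotomy: nothing forces $\|\varphi_{i}\|_{L^{\infty}}\to\infty$ along $\epsilon_{i}\to 0$. The paper instead performs the fixed rescaling $\widetilde\varphi=\epsilon^{1/N}\varphi$, $\widetilde W=\epsilon W$, which turns the TT-tensor $\sigma/(\epsilon a)$ into $\sigma/a$, and splits according to whether $\|\widetilde\varphi_{\epsilon,a}\|_{L^{\infty}}$ stays bounded or blows up as $\epsilon\to 0$.

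The genuinely missing piece is the bounded regime. There the limit object is \emph{not} a solution of \eqref{limit_non_parameter} (so your Step (a) cannot be invoked), but of the modified equation
\begin{equation*}
-\tfrac{1}{2}L^{*}LW_{a}=\sqrt{\tfrac{n-1}{n}}\,\bigl|\sigma+aLW_{a}\bigr|\,\tfrac{d\tau}{\tau},
\end{equation*}
in which $\sigma$ survives. Ruling this out is the heart of the theorem and is exactly where the hypotheses you never use in a concrete way enter: pairing with $d\tau/\tau$ and using $|\sigma+aLW_{a}|\ge a|LW_{a}|-|\sigma|$ gives $(a-c_{1})\int_{M}|LW_{a}|\,|d\tau/\tau|^{2}\,dv\le\int_{M}|\sigma|\,|d\tau/\tau|^{2}\,dv$, hence $\int_{M}|\sigma+aLW_{a}|\,|d\tau/\tau|^{2}\,dv\to 0$ as $a\to\infty$; the lower bound $|d\tau/\tau|\ge\delta$ on $M\setminus V$ (here the neighborhood $V$ of the critical set is used) upgrades this to $\int_{M\setminus V}|\sigma+aLW_{a}|\,dv\to 0$; and finally $\mathrm{supp}\,\sigma\subset M\setminus V$ together with the orthogonality $\int_{M}\langle\sigma,LW_{a}\rangle\,dv=0$ (because $\sigma$ is divergence-free) forces $\int_{M}|\sigma|^{2}\,dv=0$, contradicting $\sigma\nequiv 0$. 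Without this argument your proof establishes only the limit-equation half of the statement; the nonexistence for \eqref{CE} itself remains unproved.
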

 
   Examples where the assumptions of this theorem hold are given in \cite{DahlGicquaudHumbert}. Let us sketch briefly their construction.
   Let $M$ be the unit sphere $\mathbb{S}^n$ lying inside $\mathbb{R}^{n+1}$.
   Choose $\tau = \exp(x_1)$ so that $(d\tau/\tau)^\sharp$ is a conformal Killing vector field for the round metric $\Omega$ on $\mathbb{S}^n$. The critical set of $\tau$ then consists of
   the points $(\pm 1, 0, \ldots, 0)$. Let $V$ be an arbitrary neighborhood of these points such that $\mathbb{S}^n \setminus V$ has non-empty interior. By a result of \cite{BCS05}, we can
   deform the metric $\Omega$ on $\mathbb{S}^n \setminus V$ to a new metric $g$ so that $g$ has no conformal Killing vector. The condition $\left|L\left(\frac{d\tau}{\tau}\right)\right|\leq 2c\left|\frac{d\tau}{\tau}\right|^{2}$ is then readily checked. Non-trivial TT-tensors with arbitrarily small support were constructed in \cite{Delay}. His construction shows that there exists $\sigma \not\equiv 0$ whose
   support is contained in $\mathbb{S}^n \setminus V$.
   
   \begin{proof}[Proof of Theorem \ref{Theorem.nonexistence_2}]
   We argue by contradiction. Assume that for each $(a,\epsilon)$ s.t. $a^{-1},\epsilon a>0$ are small enough, there exists $(\varphi_{\epsilon,a},W_{\epsilon,a})$ satisfying the conformal constraint equations
    \begin{subequations}\label{CE_contradiction}
    \begin{eqnarray}
    \frac{4(n-1)}{n-2}\Delta \varphi_{\epsilon,a}+R\varphi_{\epsilon,a}&=&-\frac{n-1}{n}\tau^{2a}\varphi_{\epsilon,a}^{N-1}+\left|\frac{\sigma}{\epsilon a}+LW_{\epsilon,a}\right|^{2}\varphi_{\epsilon,a}^{-N-1},\\
    -\frac{1}{2}L^{*}LW_{\epsilon,a}&=&\frac{n-1}{n}\varphi_{\epsilon,a}^{N}d\tau^{a}.
    \end{eqnarray}
    \end{subequations}
    We will use the rescaling idea of Dahl-Gicquaud-Humbert \cite{DahlGicquaudHumbert} to show that such existence yields a contradiction. In fact, we rescale $\varphi_{\epsilon,a}$, $W_{\epsilon,a}$ as follows
    $$\widetilde{\varphi}_{\epsilon,a}=\epsilon^{\frac{1}{N}}\varphi_{\epsilon,a},~~\widetilde{W}_{\epsilon,a}=\epsilon W_{\epsilon,a}.$$
    The system \eqref{CE_contradiction} may be written as
    \begin{subequations}\label{CE_contradiction_scale}
        \begin{eqnarray}
        \epsilon^{\frac{2}{n}}\widetilde{\varphi}_{\epsilon,a}^{N+1}\left( \frac{4(n-1)}{n-2}\Delta \widetilde{\varphi}_{\epsilon,a}+R\widetilde{\varphi}_{\epsilon,a}\right)&=&-\frac{n-1}{n}\tau^{2a}\widetilde{\varphi}_{\epsilon,a}^{2N}+\left|\frac{\sigma}{ a}+L\widetilde{W}_{\epsilon,a}\right|^{2},\\
        -\frac{1}{2}L^{*}L\widetilde{W}_{\epsilon,a}&=&\frac{n-1}{n}\widetilde{\varphi}_{\epsilon,a}^{N}d\tau^{a}.
        \end{eqnarray}
        \end{subequations}
   We divide our proof into two cases.\\
  
   \textit{\textbf{Case 1.} $\lim_{\epsilon\to 0}{\left\|\widetilde{\varphi}_{\epsilon,a}\right\|_{L^{\infty}}<\infty}$:} Arguing as in the proof of Theorem \ref{theoremofDHG}, taking $\epsilon\to 0$ we obtain that there exists $W_{a}\in W^{2,p}$ satisfying
    \begin{equation}\label{limit_non_parameter_contradiction}
    \begin{aligned}
    -\frac{1}{2}L^{*}LW_{a}&=\sqrt{\frac{n-1}{n}}\left|\frac{\sigma}{a}+LW_{a}\right|\frac{d\tau^{a}}{\tau^{a}}\\
    &=\sqrt{\frac{n-1}{n}}\left|\sigma+aLW_{a}\right|\frac{d\tau}{\tau}.
    \end{aligned}
    \end{equation}
  However, \eqref{limit_non_parameter_contradiction} cannot happen for all $a>0$ large enough by \cite[Proposition 1.6]{DahlGicquaudHumbert}. In fact, take the scalar product of this equation with $d\tau/\tau$ and integrate. It follows that
   \begin{equation}\label{contradiction_assumption}
   \begin{aligned}
   \sqrt{\frac{n-1}{n}}\int_{M}{|\sigma+aLW_{a}|\left|\frac{d\tau}{\tau}\right|^{2}dv}&=-\frac{1}{2}\int_{M}{\langle LW_{a},L(d\tau/\tau)\rangle dv}\\
   &\leq c\int_{M}{\left|\frac{d\tau}{\tau}\right|^{2}|LW_{a}|dv}~~(\mbox{by our assumption}).
   \end{aligned}
   \end{equation}
   Combining this with the fact that $|\sigma+aLW_{a}|\geq a|LW_{a}|-|\sigma|$, we conclude that for $c_{1}=\sqrt{\frac{n}{n-1}}c$
   $$(a-c_{1})\int_{M}{\left|\frac{d\tau}{\tau}\right|^{2}|LW_{a}|dv}\leq \int_{M}{|\sigma|\left|\frac{d\tau}{\tau}\right|^{2}}dv.$$
   Since the right-hand side of the inequality above is bounded, we must have
   \begin{equation}\label{a.to.infty}
   \lim_{a\to\infty}\int_{M}{\left|\frac{d\tau}{\tau}\right|^{2}|LW_{a}|dv}=0.
   \end{equation}
   It then follows from \eqref{contradiction_assumption} that
   $$\lim_{a\to\infty}{\int_{M}{|\sigma+aLW_{a}|\left|\frac{d\tau}{\tau}\right|^{2}dv}}=0.$$
Since $\left|\frac{d\tau}{\tau}\right|\geq\delta$ on $M\setminus V$ for some $\delta>0$ independent of $a$, we then have by the previous inequality that 
   \begin{equation}\label{a.to.infty.2}
   \lim_{a\to\infty}{\int_{M\setminus V}{|\sigma+aLW_{a}|dv}}=0.
   \end{equation}
On the other hand, since $\text{supp}\{\sigma\}\subsetneq M\setminus V$, we get that
$$\left|\int_{M}{\langle\sigma,\sigma+aLW_{a}\rangle dv}\right|\leq ||\sigma||_{L^{\infty}}\int_{M\setminus V}{|\sigma+aLW_{a}| dv}.$$
Together with \eqref{a.to.infty.2}, this shows that
\begin{equation}\label{contradiction_fact}
\lim_{a\to\infty}\int_{M}{\langle\sigma,aLW_{a}\rangle dv}=-\int_{M}{|\sigma|^{2}dv}.
\end{equation}
However, since $\sigma$ is divergence-free, we must have 
$$\int_{M}{\langle\sigma,aLW_{a}\rangle dv}=0$$
for all $a>0$, which contradicts with \eqref{contradiction_fact}.\\

\textit{\textbf{Case 2.} $\lim_{\epsilon\to 0}{\left\|\widetilde{\varphi}_{\epsilon,a}\right\|_{L^{\infty}}}=+\infty$:} 
Set $\gamma_{\epsilon,a}=\left\|\widetilde{\varphi}_{\epsilon,a}\right\|_{L^{\infty}}$, we rescale $\widetilde{\varphi}_{\epsilon,a}$, $\widetilde{W}_{\epsilon,a}$, $\widetilde{\sigma}_{\epsilon,a}$ again
$$ \widehat{\varphi}_{\epsilon,a}=\gamma_{\epsilon,a}^{-1}\widetilde{\varphi}_{\epsilon,a},~~\widehat{W}_{\epsilon,a}=\gamma_{\epsilon,a}^{-N}\widetilde{W}_{\epsilon,a},~~\mbox{and}~~\widehat{\sigma}_{\epsilon,a}=\gamma_{\epsilon,a}^{-N}\sigma.$$
The system \eqref{CE_contradiction_scale} may be rewritten as
\begin{subequations}\label{CE_contradiction_scale_again}
        \begin{eqnarray}
        \epsilon^{\frac{2}{n}}\gamma_{\epsilon,a}^{-(N-2)}\widehat{\varphi}_{\epsilon,a}^{N+1}\left( \frac{4(n-1)}{n-2}\Delta \widehat{\varphi}_{\epsilon,a}+R\widehat{\varphi}_{\epsilon,a}\right)&=&-\frac{n-1}{n}\tau^{2a}\widehat{\varphi}_{\epsilon,a}^{2N}+\left|\frac{\widehat{\sigma}}{ a}+L\widehat{W}_{\epsilon,a}\right|^{2},\\
        -\frac{1}{2}L^{*}L\widehat{W}_{\epsilon,a}&=&\frac{n-1}{n}\widehat{\varphi}_{\epsilon,a}^{N}d\tau^{a}.
        \end{eqnarray}
        \end{subequations}

Arguing as in the proof of Theorem \ref{theoremofDHG}, and taking $\epsilon\to 0$ we again obtain that there exists a nontrivial solution $W_{a}\in W^{2,p}$ satisfying the limit equation
\begin{equation}\label{limit_equation_contradiction_2}
-\frac{1}{2}L^{*}LW_{a}=\sqrt{\frac{n-1}{n}}\left|LW_{a}\right|\frac{d\tau^{a}}{\tau^{a}}=a\sqrt{\frac{n-1}{n}}\left |LW_{a}\right|\frac{d\tau}{\tau}.\\
\end{equation}
 Our treatment for such limit equation is also similar to the previous case. In fact, take the scalar product of this equation with $d\tau/\tau$ and integrate. It follows that
 \begin{equation}
   \begin{aligned}
   a\sqrt{\frac{n-1}{n}}\int_{M}{|LW_{a}|\left|\frac{d\tau}{\tau}\right|^{2}dv}&=-\frac{1}{2}\int_{M}{\langle LW_{a},L(d\tau/\tau)\rangle dv}\\
   &\leq c\int_{M}{|LW_{a}|\left|\frac{d\tau}{\tau}\right|^{2}dv}~~(\mbox{by our assumption}).
   \end{aligned}
   \end{equation}
 Then assuming $a>\sqrt{\frac{n}{n-1}}c$, we obtain that $\int_{M}{|LW_{a}|\left|\frac{d\tau}{\tau}\right|^{2}dv}=0$, and hence $|LW_{a}|\left|\frac{d\tau}{\tau}\right|\equiv 0$. Thus, we obtain from \eqref{limit_equation_contradiction_2} that $W_{a}\equiv 0$, provided that $(M,g)$ has no conformal Killing vector field. This is a contradiction with the fact that $W_{a}$ is nontrivial.\\
 \\
Since Case 2 coincides with the situation of nonexistence of a solution to the limit equation \eqref{limit_non_parameter}, the proof is completed.
\end{proof}
As direct consequences of Theorem \ref{theoremofDHG} and \ref{Theorem.nonexistence_2}, we have the following results. 
\begin{Corollary} \label{Maxwell'squestion_2}(\textbf{\textit{An answer to Maxwell's question}})
Let $(M,g,\tau)$ be given as in Theorem \ref{Theorem.nonexistence_2}. If $\mathcal{Y}_{g}>0$, then the conformal constraint equations \eqref{CE} associated to $(g,\tau^{a},0)$ have a (nontrivial)  solution for all $a>0$ large enough.
 \end{Corollary}
 \begin{proof}
We have by Theorem \ref{Theorem.nonexistence_2} that for all $a^{-1}, \epsilon a>0$ small enough, seed  data $(g,\tau^{a},\frac{\sigma}{\epsilon a})$ satisfies neither $(i)$ nor $(ii)$ in Theorem \ref{theoremofDHG}, provided $\sigma$ is given as in Theorem \ref{Theorem.nonexistence_2}. Thus, our corollary is proven by the first statement in the assertion $(iii)$ of Theorem \ref{theoremofDHG} with $f\equiv R$. The proof is completed.
 \end{proof}
 \begin{Corollary}(\textbf{\textit{Nonuniqueness of solutions}})\label{theoremnonuniaueness}
 Assume that $(M,g,\tau,\sigma,a,\epsilon)$ is given as in Theorem \ref{Theorem.nonexistence_2}. If $\mathcal{Y}_{g}>0$, then there exists a sequence $\{t_{i}\}$ converging to $0$ s.t. the conformal constraint equations \eqref{CE} associated to $(g,t_{i}\tau^{a},\frac{\sigma}{\epsilon a})$ have at least two solutions.
 \end{Corollary}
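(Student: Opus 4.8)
The plan is to deduce the corollary directly from Theorem \ref{theoremofDHG} applied to the seed data $\left(g,\tau^{a},\frac{\sigma}{\epsilon a}\right)$, with the auxiliary continuous function $f$ taken to be $R$. First I would check that this seed data satisfies the standing assumptions \eqref{condintial}--\eqref{condinitial2}: since $(M,g,\tau,\sigma,a,\epsilon)$ is as in Theorem \ref{Theorem.nonexistence_2}, we have $g\in C^{2}$, $\tau^{a}\in W^{1,p}$ with $\tau^{a}>0$, $\frac{\sigma}{\epsilon a}\in W^{1,p}$ is a nonzero transverse- and divergence-free $(0,2)$-tensor, and $(M,g)$ has no conformal Killing vector field. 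Hence Theorem \ref{theoremofDHG} applies to the data $\left(g,\tau^{a},\frac{\sigma}{\epsilon a}\right)$.

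Next I would eliminate assertions $(i)$ and $(ii)$ of Theorem \ref{theoremofDHG} for this data. This is precisely what Theorem \ref{Theorem.nonexistence_2} provides: for $a^{-1}$ and $\epsilon a$ small enough --- which is built into the phrase ``given as in Theorem \ref{Theorem.nonexistence_2}'' --- neither the conformal constraint equations \eqref{CE} nor the limit equation \eqref{limit_non_parameter} associated with $\left(g,\tau^{a},\frac{\sigma}{\epsilon a}\right)$ possesses a (nontrivial) solution, so $(i)$ and $(ii)$ both fail. Since $\mathcal{Y}_{g}>0$, by Remark \ref{assumptionR} we may assume $R>0$; taking $f=R>0$ in Theorem \ref{theoremofDHG} (as in the proof of Corollary \ref{Maxwell'squestion_2}), we conclude that assertion $(iii)$ must hold.

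It then remains only to read off the conclusion from the second statement of $(iii)$: since $\mathcal{Y}_{g}>0$, there is a sequence $\{t_{i}\}$ converging to $0$ such that the conformal constraint equations \eqref{CE} associated with $\left(g,t_{i}\tau^{a},\frac{\sigma}{\epsilon a}\right)$ admit at least two solutions --- which is exactly the statement of the corollary. I do not expect any genuine obstacle here: the mechanism producing the two solutions (one family $(\varphi_{i},W_{i})$ with $\|\varphi_{i}\|_{L^{\infty}}\to+\infty$ coming from Step 3 of the proof of Theorem \ref{theoremofDHG}, together with a uniformly bounded family supplied by the small-mean-curvature existence result of \cite[Theorem 4.8 and Remark 4.9]{Nguyen} or \cite[Theorem 2.1]{GicquaudNgo}) is already contained in that proof, and the only bookkeeping needed --- freeing $t_{i}$ from the mean curvature through Remark \ref{change_variation} --- is carried out there as well. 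In short, the corollary is a formal consequence of Theorems \ref{theoremofDHG} and \ref{Theorem.nonexistence_2}.
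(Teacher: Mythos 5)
Your proposal is correct and follows exactly the paper's route: rule out assertions $(i)$ and $(ii)$ of Theorem \ref{theoremofDHG} for the data $\left(g,\tau^{a},\frac{\sigma}{\epsilon a}\right)$ via Theorem \ref{Theorem.nonexistence_2}, take $f=R>0$ using Remark \ref{assumptionR}, and invoke the second conclusion of assertion $(iii)$. The paper's own proof is just the one-line observation that the argument of Corollary \ref{Maxwell'squestion_2} carries over with the second statement of $(iii)$ in place of the first, which is precisely what you did.
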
 
 \begin{proof}
 The same arguments as in Corollary \ref{Maxwell'squestion_2} works here.  More precisely, the only difference from the previous corollary is that we will use the second conclusion in the assertion $(iii)$ of Theorem \ref{theoremofDHG} with $f\equiv R$ instead of the first, and then the corollary follows.
 \end{proof}
\renewcommand{\bibname}{References}


\begin{thebibliography}{99}
\bibitem{Aubin} T. Aubin, \textit{Some nonlinear problems in Riemannian geometry}, Springer Monographs in Mathematics, Springer-Verlag, Berlin, $1998.$


\bibitem{BartnikIsenberg} R. Bartnik and J. Isenberg, \textit{The constraint equations}, The Einstein equations and the large scale behavior of gravitational fields, Birkh\"auser,  Basel, $2004$, pp. $1-38$.


\bibitem{BCS05} {R. Beig and P.T Chru{\'s}ciel and R. Schoen}, \textit{K{ID}s are non-generic}, {Ann. Henri Poincar\'e}, {6}, {2005}, {1}, {155-194}.


\bibitem{CBY80} Y. Choquet-Bruhat and J.W. York, Jr., \textit{The Cauchy problem}, General relativity and gravitation, Vol. $1$, Plenum, New York, $1980$, pp. $99-172$.


\bibitem {DahlGicquaudHumbert} {M.Dahl. and R. Gicquaud. and E. Humbert}, \textit{A limit equation associated to the solvability of the vacuum {E}instein constraint equations by using the conformal method}, {Duke Math. J.}, {161}, {2012}, {14}, {2669-2697}.


\bibitem{DahlGicquaudHumbertNonExistence} {M.Dahl and R. Gicquaud. and E. Humbert}, \textit{A non-existence result for  a generalization of the equations of the conformal method in general relativity}, {Class. Quantum Grav.}, {30}, {2013}, {075004, 8}.

\bibitem{Delay} {Erwann Delay}, \textit{Smooth compactly supported solutions of some underdetermined elliptic {PDE}, with gluing applications},
  {Comm. Partial Differential Equations},
  {37},
  {2012},
  {10},
  {1689-1716}.
     

\bibitem{GicquaudNgo} {R. Gicquaud. and Q.A. Ng\^{o}},
     \title{A new point of view on the solutions to the {E}instein constraint equations with arbitrary mean curvature and small {TT}-tensor},
  {Class. Quantum Grav.},
  {31},
  {2014},
  {19},
  {195014 (20pp)}.


\bibitem{GicquaudSakovich}
 {R. Gicquaud and A. Sakovich},
     \textit {A large class of non-constant mean curvature solutions of the
              {E}instein constraint equations on an asymptotically
              hyperbolic manifold},
   {Comm. Math. Phys.},
    {310},
    {2012},
    {3},
    {705-763}.
    
    
\bibitem{GilbargTrudinger}
    {Gilbarg, D. and Trudinger, N.S.},
     \textit {Elliptic partial differential equations of second order},
    {Classics in Mathematics},
    {Reprint of the 1998 edition},
  {Springer-Verlag},
   {Berlin},
      {2001},
     {xiv+517}.


\bibitem{HolstMeierNonuniqueness}
   {M. Holst and C. Meier},
   \textit{ Nonuniqueness of solutions to the conformal formulation},
   {to appear in Annales Henri Poincare},
   {2012},
   {http://arxiv.org/abs/1210.2156}.


\bibitem{HNT2}
    {M. Holst and G. Nagy and G. Tsogtgerel},
     \textit {Rough solutions of the {E}instein constraints on closed manifolds without near-{CMC} conditions},
   {Comm. Math. Phys.},
   {288},
   {2009},
    {2},
    {547-613},


\bibitem{Isenberg}
  {J. Isenberg},
  \textit	 {Constant mean curvature solutions of the {E}instein constraint equations on closed manifolds},
  {Class. Quantum Grav.},
  {12},
 {1995},
 {9},
 {2249-2274}.


\bibitem{IsenbergOMurchadha}
    {J. Isenberg and N. {\'O} Murchadha},
     \textit {Non-{CMC} conformal data sets which do not produce solutions of the {E}instein constraint equations},
      {A spacetime safari: essays in honour of Vincent Moncrief},
   {Class. Quantum Grav.},
    {21},
    {2004},
     {3},
     {S233-S241}.


\bibitem{Li44}
  {A. Lichnerowicz},
  \textit	 {L'int\'egration des \'equations de la gravitation relativiste et le probl\`eme des {$n$} corps},
  {J. Math. Pures Appl. (9)},
  {23},
  {1944},
  {37-63}.
  
  
\bibitem{MaxwellConformalMethod}
    {D. Maxwell},
     \textit {The Conformal Method and the Conformal Thin-Sandwich Method Are the Same},
     {arXiv:1402.5585}.


\bibitem{MaxwellRoughCompact}
    {D. Maxwell},
     \textit {Rough solutions of the {E}instein constraint equations on compact manifolds},
   {J. Hyperbolic Differ. Equ.},
   {2},
   {2005},
   {2},
   {521-546}.


\bibitem{MaxwellNonCMC}
    {D. Maxwell},
     \textit {A class of solutions of the vacuum {E}instein constraint
              equations with freely specified mean curvature},
   {Math. Res. Lett.},
   {16},
  {2009},
  {4},
  {627-645}.


\bibitem{MaxwellConformalParameterization}
    {D. Maxwell},
     \textit {A model problem for conformal parameterizations of the
              {E}instein constraint equations},
 {Comm. Math. Phys.},
 {302},
 {2011},
 {3},
 {697-736},
 {0010-3616}.


\bibitem{Nguyen}
    {T.C. Nguyen},
     \textit {Applications of fixed point theorems to the vacuum
              {E}instein constraint equations with non-constant mean curvature},
 { arXiv:1405.7731v2 }


\bibitem{Rauzy}
    {Rauzy, Antoine},
     \textit {Courbures scalaires des vari\'et\'es d'invariant conforme
              n\'egatif},
  {Trans. Amer. Math. Soc.},
  {347},
   {1995},
   {12},
  {4729-4745}.


\end{thebibliography}
\end{document}